\theoremstyle{plain}
\newtheorem{thm}{Thm}[section]
\newtheorem{theorem}[thm]{Theorem}
\newtheorem{lemma}[thm]{Lemma}
\newtheorem{corollary}[thm]{Corollary}
\newtheorem{conjecture}[thm]{Conjecture}
\newtheorem{definition}[thm]{Definition}
\newenvironment{proof*}{\noindent\emph{Proof of Theorem~}}{\hfill$\Box$}
\newcommand{\spto}{\ensuremath{\stackrel{s.p.}{\longrightarrow}}}
\newcommand{\overbar}[1]{\mkern 1.5mu\overline{\mkern-1.5mu#1\mkern-1.5mu}\mkern 1.5mu}
\renewcommand{\pod}[1]{\allowbreak\mathchoice
	{\if@display \mkern 0mu\else \mkern 0mu\fi (#1)}
	{\if@display \mkern 0mu\else \mkern 0mu\fi (#1)}
	{\mkern 1mu(\mathrm{mod}\mkern 4mu #1)}
	{\mkern 0mu(#1)}
}
\tikzstyle{vertex}=[circle, draw, fill=black!50,
\tikzset{->-/.style={decoration={
			markings,
			mark=at position .5 with {\arrow{>}}},postaction={decorate}}}
\tikzstyle{bigblue}=[color=blue, very thick, >=stealth]
\tikzstyle{lightblue}=[color=blue, thin, >=stealth]
\tikzstyle{bigred}=[color=red, very thick, >=stealth]
\tikzstyle{lightred}=[color=red, thin, >=stealth]
\tikzstyle{biggreen}=[color=black!30!green, very thick, >=stealth]
\tikzstyle{lightgreen}=[color=black!30!green,  thin, >=stealth]
\title{Signed bipartite circular cliques and\\ a bipartite analogue of Gr\"{o}tzsch's theorem}
\date{\today} 
\author{Reza Naserasr} 
\author{Zhouningxin Wang (\Letter)}
\affil{Université de Paris, IRIF, CNRS, F-75013 Paris, France. 
	
	Email addresses: \{reza, wangzhou4\}@irif.fr.}
\begin{document}

\maketitle

\abstract{\noindent The notion of the circular coloring of signed graphs is a recent one that simultaneously extends both notions of the circular coloring of graphs and $0$-free coloring of signed graphs. A circular $r$-coloring of a signed graph $(G, \sigma)$ is to assign points of a circle of circumference $r$, $r\geq 2$, to the vertices of $G$ such that vertices connected by a positive edge are at circular distance at least $1$ and vertices connected by a negative edge are at circular distance at most $\frac{r}{2}-1$. The infimum of all $r$ for which $(G, \sigma)$ admits a circular $r$-coloring is said to be the circular chromatic number of $(G, \sigma)$ and is denoted by $\chi_c(G, \sigma)$. For any rational number $r=\frac{p}{q}$, two notions of circular cliques are presented corresponding to the edge-sign preserving homomorphism and the switching homomorphism. 

\noindent It is also shown that the restriction of the study of circular chromatic numbers to the class of signed bipartite simple graphs already captures the study of circular chromatic numbers of graphs via basic graph operations, even though the circular chromatic number of every signed bipartite graph is bounded above by $4$. 

\noindent In this work, we consider the restriction of the circular chromatic number to this class of signed graphs and construct signed bipartite circular cliques with respect to both notions of homomorphisms. We then present reformulations of the $4$-Color Theorem and the Gr\"otzsch theorem. As a bipartite analogue of Gr\"otzsch's theorem, we prove that every signed bipartite planar graph of negative girth at least $6$ has circular chromatic number at most $3$.}

\section{Introduction}
\setlength{\parindent}{0pt}
A \emph{signed graph} is a graph $G$ (allowing loops and multi-edges) together with an assignment $\sigma: E(G) \rightarrow \{+, -\}$, denoted by $(G, \sigma)$. We note that signed graphs are allowed to have multi-edges, but we consider multi-edges only if they are of different signs. The signed graph on two vertices connected by two parallel edges of different signs is called \emph{digon}. Furthermore, unless specified, graphs are considered to have no loop.
The \emph{sign of a closed walk} of $(G, \sigma)$ is the product of signs of all its edges (allowing repetition). Given a signed graph $(G, \sigma)$ and a vertex $v$ of $(G, \sigma)$, a \emph{switching at $v$} is to switch the signs of all the edges incident to $v$. We say a signed graph $(G, \sigma')$ is \emph{switching equivalent} to $(G, \sigma)$ if it is obtained from $(G, \sigma)$ by a series of switchings at vertices. In this case, we say the signature $\sigma'$ is \emph{equivalent} to $\sigma$. It has been proved in \cite{Za82} that two signed graphs $(G, \sigma_1)$ and $(G, \sigma_2)$ are switching equivalent if and only if they have the same set of negative cycles.

A \emph{(switching) homomorphism} of a signed graph $(G, \sigma)$ to $(H, \pi)$ is a mapping of $V(G)$ and $E(G)$ to $V(H)$ and $E(H)$ (respectively) such that the adjacencies, the incidences and the signs of the closed walks are preserved. When there exists such a homomorphism, we write $(G, \sigma)\to (H, \pi)$. A homomorphism of $(G, \sigma)$ to $(H, \pi)$ is said to be \emph{edge-sign preserving} if it, furthermore, preserves the signs of the edges. When there exists an edge-sign preserving homomorphism of $(G, \sigma)$ to $(H, \pi)$, we write $(G, \sigma)\spto (H, \pi)$. The connection between these two kinds of homomorphisms is established as follows: Given two signed graphs $(G, \sigma)$ and $(H, \pi)$, $(G, \sigma)\to (H, \pi)$ if and only if there exists a signature $\sigma'$ which is equivalent to $\sigma$, such that $(G, \sigma')\spto (H, \pi)$. An equivalent reformulation of this is through the following definition.

\begin{definition}
Given a signed graph $(G, \sigma)$, the Double Switch Graph of it, denoted DSG$(G, \sigma)$, is the signed graph built from two disjoint copies $(G_1,\sigma_1)$ and $(G_2,\sigma_2)$ of $(G, \sigma)$ by adding the following set of edges in between. If $xy$ is a positive (resp. negative) edge of $(G, \sigma)$, then $x_1y_2$ and $x_2y_1$ are negative (resp. positive) edges of DSG$(G, \sigma)$. Here $x_1,x_2,y_1,y_2$ are representing copies of $x$ and $y$ in $G_1$ and $G_2$ in the most natural way.
\end{definition}

The connection between the two notions of homomorphisms is as follows.

\begin{theorem}{\rm \cite{BG09}}
A signed graph $(G,\sigma)$ admits a switching homomorphism to $(H, \pi)$ if and only if it admits an edge-sign preserving homomorphism to DSG$(H,\pi)$. 
\end{theorem}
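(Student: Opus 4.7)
My plan is to prove both implications by explicitly exhibiting, in each direction, the switching set $S\subseteq V(G)$ and the map between $V(G)$ and $V(H)$, and then verifying that edge signs match in a small case analysis (three cases, depending on how many endpoints lie in $S$). The definition of $\mathrm{DSG}(H,\pi)$ is rigged exactly to record, in the second coordinate, whether a vertex has been switched, and to record via the cross-edges the effect of switching on a single endpoint; so once the correspondence is set up, the verification is essentially a tautology.

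\textbf{Forward direction.} Assume $(G,\sigma)\to(H,\pi)$, so by definition there is a signature $\sigma'\equiv\sigma$ with $(G,\sigma')\spto(H,\pi)$; call the edge-sign preserving map $\phi$, and let $S\subseteq V(G)$ be a switching set taking $\sigma$ to $\sigma'$. I would define $\psi:V(G)\to V(\mathrm{DSG}(H,\pi))$ by $\psi(v)=\phi(v)_1$ if $v\notin S$ and $\psi(v)=\phi(v)_2$ if $v\in S$. To check that $\psi$ is edge-sign preserving from $(G,\sigma)$ to $\mathrm{DSG}(H,\pi)$, take any edge $uv$ and consider whether $0$, $2$, or $1$ of its endpoints lie in $S$. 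In the first two cases $\psi(u),\psi(v)$ lie in the same copy of $H$, so by construction the edge $\psi(u)\psi(v)$ has sign $\pi(\phi(u)\phi(v))=\sigma'(uv)$, and $\sigma'(uv)=\sigma(uv)$ since the parity of switched endpoints is even. In the third case, $\psi(u)\psi(v)$ is a cross-edge, hence has sign $-\pi(\phi(u)\phi(v))=-\sigma'(uv)=\sigma(uv)$. Thus $\psi$ preserves all signs.

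\textbf{Backward direction.} Conversely, assume there is an edge-sign preserving homomorphism $\psi:(G,\sigma)\spto\mathrm{DSG}(H,\pi)$. I would define $S:=\{v\in V(G):\psi(v)\in V(H_2)\}$ and let $\phi(v)$ be the underlying vertex of $H$ of which $\psi(v)$ is a copy. Taking $\sigma'$ to be the switching of $\sigma$ on $S$, the same three-case analysis shows that $\phi$ is an edge-sign preserving map $(G,\sigma')\spto(H,\pi)$: when both or neither endpoints of an edge lie in $S$ the edge $\psi(u)\psi(v)$ lives inside a single copy $H_i$ and its sign equals both $\sigma(uv)=\sigma'(uv)$ and $\pi(\phi(u)\phi(v))$, while in the mixed case the cross-edge carries the opposite sign, which is exactly compensated by the switching at the one endpoint in $S$. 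This gives $(G,\sigma)\to(H,\pi)$.

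\textbf{Main difficulty.} There is no real obstacle; the work is bookkeeping. The one point one must be careful with is the sign convention in the definition of $\mathrm{DSG}$ (cross-edges carry the opposite sign of the original edge), so one has to make sure that the case with exactly one switched endpoint really does flip twice and thus agrees. Multi-edges (digons) and the possibility that $\phi(u)=\phi(v)$ can be handled by applying the argument edge-by-edge, since the definitions are edge-based.
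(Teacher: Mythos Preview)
Your proof is correct and is the standard argument for this equivalence. Note, however, that the paper does not actually prove this theorem: it is quoted as a known result from \cite{BG09}, so there is no ``paper's own proof'' to compare against; your argument is precisely the natural one and is what one would expect to find in the original reference.
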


We note that in building DSG$(H,\pi)$ for the purpose of this theorem, if for a vertex $x$ there already exists a vertex $x'$ which is like obtained from $x$ by a switching, then we do not need to add a copy of $x$. 
When $(H, \pi)$ has the property that for each vertex $x$ of it, there is a such a vertex $x'$ (which is like a switched copy of $x$), then a signed graph $(G, \sigma)$ admits a homomorphism to $(H, \pi)$ if and only if it also admits an edge-sign preserving homomorphism to $(H,\pi)$. That is because if a vertex $u$ of $(G,\sigma)$ is mapped to the vertex $x$ of $(H, \pi)$ after a switching, then one may instead map $u$ to $x'$ without a switching.
We note that some of the circular cliques that we will discuss here are of this form.

Observe that the parity of the lengths and the signs of closed walks are preserved by a homomorphism. Given a signed graph $(G, \sigma)$ and an element $ij\in \mathbb{Z}^2_2$, we define $g_{ij}(G,\sigma)$ to be the length of a shortest closed walk whose number of negative edges modulo $2$ is $i$ and whose length modulo $2$ is $j$. When there exists no such closed walk, we say $g_{ij}(G, \sigma)=\infty$. In general, the length of the shortest negative closed walk of a signed graph is said to be its \emph{negative girth}. By the definition of the homomorphism of signed graphs, we have the following no-homomorphism lemma.
 
\begin{lemma}\label{lem:no-hom}{\em \cite{NSZ21}}{\rm [No-homomorphism lemma]}
If $(G, \sigma)\to (H, \pi)$, then $g_{ij}(G, \sigma)\geq g_{ij}(H, \pi)$ for each $ij\in \mathbb{Z}^2_2$. 
\end{lemma}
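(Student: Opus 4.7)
The plan is to fix $ij \in \mathbb{Z}_2^2$, assume $g_{ij}(G, \sigma) = \ell$ is finite, pick a witnessing closed walk $W$ in $(G, \sigma)$ of length $\ell$ whose length has parity $j$ and whose number of negative edges has parity $i$, and then push $W$ forward through a switching homomorphism $\varphi : (G, \sigma) \to (H, \pi)$. If one can show that $\varphi(W)$ is a closed walk in $(H, \pi)$ of length $\ell$ whose own $ij$-class agrees with that of $W$, then $g_{ij}(H, \pi) \leq \ell = g_{ij}(G, \sigma)$, which is the desired inequality. The case $g_{ij}(G, \sigma) = \infty$ is vacuous.

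Two preservation facts feed this plan. First, $\varphi$ is in particular a graph homomorphism in the usual sense, so it carries walks of length $\ell$ to walks of length $\ell$ and closed walks to closed walks; in particular the length parity $j$ is inherited by $\varphi(W)$. Second, by the very definition of switching homomorphism recalled in the excerpt, the sign of a closed walk is preserved by $\varphi$. Since the sign of a closed walk is $(+)$ or $(-)$ according to whether its number of negative edges is even or odd, the simultaneous preservation of length parity and of sign forces the parity $i$ of the number of negative edges to be preserved as well. Putting the two together, the $ij$-class of $\varphi(W)$ coincides with that of $W$, completing the argument.

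I do not expect a real obstacle here. The one subtlety worth flagging, and which the proof should lean on explicitly, is that a switching homomorphism need not preserve the sign of any individual edge, only the sign of every closed walk; this is exactly the weaker invariant the argument uses, and it is precisely what makes the statement work for switching homomorphisms and not just for edge-sign preserving ones. Under an edge-sign preserving homomorphism $(G,\sigma) \spto (H,\pi)$ the same conclusion would in fact follow more directly, since each negative edge of $W$ would map to a negative edge of $\varphi(W)$, so the parity $i$ would be preserved edge-by-edge rather than via the closed-walk sign.
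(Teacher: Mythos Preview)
Your argument is correct and is the standard one. Note, however, that the paper does not actually prove this lemma: it is stated with a citation to \cite{NSZ21} and used as a black box. So there is no ``paper's own proof'' to compare against here; what you have written is precisely the short justification one would expect in the cited source, relying on the fact that a switching homomorphism by definition preserves adjacencies, incidences, and the signs of closed walks, which immediately gives preservation of both the length parity $j$ and the negative-edge parity $i$ of any closed walk.
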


\subsection{Homomorphisms of signed bipartite graphs}

Given a graph $G$, we construct a signed bipartite graph $S(G)$ as follows: The vertex set of $S(G)$ consists of $V(G)$ and $\{x_{uv}, y_{uv}\mid uv \in E(G)\}$; For the edge set, we join each of $x_{uv}$ and $y_{uv}$ to each of $u$ and $v$, and for the signature, we assign signs such that each $4$-cycle $ux_{uv}vy_{uv}$ is negative. Intuitively, in constructing $S(G)$, we replace each edge $uv$ of $G$ with a negative $4$-cycle. We note there is more than one choice of signature here, and moreover, having vertices already labeled, not every two such signatures are switching equivalent. That which of the two sides of a $4$-cycle is chosen to be negative makes a difference here. However, up to a switching isomorphism, any two signature choices of $S(G)$ are the same.

The first easy observation on $S(G)$ is that it is a signed bipartite graph where in one part all vertices are of degree $2$. This construction was introduced in \cite{NRS15} where the next two theorems are proved in order to show the importance of the study of homomorphisms of signed bipartite graphs.

\begin{theorem}{\rm \cite{NRS15}}\label{thm:S(G)->S(H)}
Given graphs $G$ and $H$, $G\to H$ if and only if $S(G)\to S(H)$.
\end{theorem}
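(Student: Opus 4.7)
The plan is to prove both directions separately.

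For the forward direction, given a homomorphism $\phi : G \to H$, I would define $\Phi : V(S(G)) \to V(S(H))$ by $\Phi(v) = \phi(v)$ for $v \in V(G)$ and, for each $uv \in E(G)$, $\Phi(x_{uv}) = x_{\phi(u)\phi(v)}$, $\Phi(y_{uv}) = y_{\phi(u)\phi(v)}$; this is well defined since $\phi(u)\phi(v) \in E(H)$. The map is plainly a homomorphism of underlying graphs and sends each natural negative $4$-cycle of $S(G)$ to a natural negative $4$-cycle of $S(H)$. To upgrade it to a switching homomorphism, I would use the fact that the switching class of a signature is determined by its set of negative cycles, so it suffices to check that every cycle of $S(G)$ has the same sign as its image closed walk. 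Any such cycle is encoded by a closed walk $v_0, v_1, \ldots, v_k = v_0$ in $G$ together with a choice, at each step, of an $x$- or a $y$-subdivision vertex of the edge $v_i v_{i+1}$; fixing the signature of $S(G)$ so that the edge $u\,y_{uv}$ is negative and the other three edges of each natural $4$-cycle are positive, a direct check shows each $x$-traversal contributes $+1$ and each $y$-traversal contributes $-1$ to the total sign. Since $\Phi$ preserves the traversal type, the sign of the image walk equals that of the source cycle.

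For the backward direction, let $\Psi : S(G) \to S(H)$ be a switching homomorphism. The key structural claim is that every negative closed walk of length $4$ in $S(H)$ is a natural $4$-cycle $a\,x_{ab}\,b\,y_{ab}$ for some edge $ab \in E(H)$. Indeed, in the simple bipartite signed graph $S(H)$ any length-$4$ closed walk that repeats an edge has sign $+1$ (the repeated edge contributes an even power of its sign), so a negative length-$4$ closed walk must consist of four distinct edges and hence be a genuine $4$-cycle; by construction of $S(H)$, its only negative $4$-cycles are the natural ones.

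Applied to each natural negative $4$-cycle $u\,x_{uv}\,v\,y_{uv}$ of $S(G)$, this forces $\Psi$ to send it to a natural negative $4$-cycle $a\,x_{ab}\,b\,y_{ab}$ of $S(H)$ for some $ab \in E(H)$; the alternation of original and subdivision vertices around the $4$-cycle then forces $\{\Psi(u), \Psi(v)\} = \{a, b\} \subseteq V(H)$. Hence setting $\psi(v) := \Psi(v)$ for every non-isolated $v \in V(G)$ (and extending arbitrarily to isolated vertices) yields a map $V(G) \to V(H)$ that sends every edge of $G$ to an edge of $H$, i.e., a homomorphism $\psi : G \to H$. The main obstacle is the structural identification of negative length-$4$ closed walks in $S(H)$ used in the backward direction; once this is in hand, the remainder is bookkeeping.
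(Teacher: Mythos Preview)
The paper does not give its own proof of this theorem; it is quoted from \cite{NRS15}. So there is no paper argument to compare against, and I can only assess your proposal on its own merits.

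Your forward direction is sound: the decomposition of an arbitrary cycle of $S(G)$ into a closed walk in $G$ with $x$/$y$ choices, together with the sign bookkeeping under a fixed signature, correctly shows that $\Phi$ preserves signs of all closed walks.

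In the backward direction there is a genuine gap. You assert that ``the alternation of original and subdivision vertices around the $4$-cycle then forces $\{\Psi(u),\Psi(v)\}=\{a,b\}\subseteq V(H)$.'' This does not follow: nothing in the definition of a switching homomorphism forces the bipartition $(V(G),\text{subdivision vertices})$ of $S(G)$ to be matched with the corresponding bipartition of $S(H)$. Concretely, take $G=H=K_2$; then $S(G)$ and $S(H)$ are both negative $4$-cycles, and the rotation sending $u\mapsto x_{ab}$, $x_{uv}\mapsto b$, $v\mapsto y_{ab}$, $y_{uv}\mapsto a$ is a perfectly good switching homomorphism $S(G)\to S(H)$ in which $\Psi(u),\Psi(v)\notin V(H)$. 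Your definition $\psi(v):=\Psi(v)$ then does not even land in $V(H)$.

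The gap is repairable. If some vertex $u\in V(G)$ has $\Psi(u)$ equal to a subdivision vertex, say $x_{ab}$, then the unique negative $4$-cycle of $S(H)$ through $x_{ab}$ is $a\,x_{ab}\,b\,y_{ab}$, so every neighbour $v$ of $u$ in $G$ is forced to satisfy $\Psi(v)=y_{ab}$; iterating, the whole component $C$ of $u$ in $G$ is $2$-coloured by $\{x_{ab},y_{ab}\}$, hence $C$ is bipartite and maps to the edge $ab$ of $H$. On components where $\Psi$ already sends $V(G)$ into $V(H)$, your argument goes through. You should make this case distinction explicit rather than claim that the bipartitions are automatically aligned.
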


\begin{theorem}{\rm \cite{NRS15}}\label{thm:X->S(G)}
Given a graph $G$, we have the followings.
\begin{itemize}
	\item $\chi(G)\leq 2$ if and only if $S(G)\to (K_{2,2}, e)$;
	\item $\chi(G)\leq k$ if and only if $S(G)\to (K_{k,k}, M)$ for $k\geq 3$.
\end{itemize}
\end{theorem}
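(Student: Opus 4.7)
The plan is to establish each direction of both biconditionals by a direct construction. For the forward implications, I will build explicit edge-sign preserving homomorphisms from a given proper coloring; for the backward implications, I will extract a proper coloring from a given switching homomorphism, using the bipartition of $S(G)$ together with the preservation of signs of closed walks.

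For the forward direction in the case $k\geq 3$, I would label the parts of $K_{k,k}$ as $\{a_1,\ldots,a_k\}$ and $\{b_1,\ldots,b_k\}$, take $M=\{a_ib_i:i\in[k]\}$ as the set of negative edges, and start with a proper $k$-coloring $c$ of $G$. I would set $f(v)=a_{c(v)}$ for each $v\in V(G)$, and for each edge $uv\in E(G)$ with $c(u)=i$ and $c(v)=j$, set $f(x_{uv})=b_i$ and $f(y_{uv})=b_\ell$ for some $\ell\in[k]\setminus\{i,j\}$, which exists precisely because $k\geq 3$. Each negative $4$-cycle $ux_{uv}vy_{uv}$ of $S(G)$ is then sent to the $4$-cycle $a_ib_ia_jb_\ell$ of $(K_{k,k},M)$, which carries exactly one negative edge $a_ib_i$ and is therefore negative. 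For the $k=2$ case with target $(K_{2,2},e)$ in which $e=a_1b_1$ is the sole negative edge, a bipartition $(A,B)$ of $G$ yields the map $A\mapsto a_1$, $B\mapsto a_2$, $x_{uv}\mapsto b_1$, $y_{uv}\mapsto b_2$, which is checked analogously.

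For the backward direction, I would start from a switching homomorphism $f:S(G)\to(K_{k,k},M)$ (respectively, $f:S(G)\to(K_{2,2},e)$). Since $S(G)$ is bipartite with parts $V(G)$ and $\{x_{uv},y_{uv}:uv\in E(G)\}$, and both target signed graphs admit the signature-preserving automorphism $a_i\leftrightarrow b_i$ that swaps their two sides, I would compose $f$ componentwise with this swap as needed, arranging that $f(V(G))\subseteq\{a_1,\ldots,a_k\}$. Define $c(v)=i$ when $f(v)=a_i$. Toward a contradiction, assume $c(u)=c(v)=i$ for some edge $uv\in E(G)$ and write $f(x_{uv})=b_\ell$, $f(y_{uv})=b_m$. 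The negative $4$-cycle $ux_{uv}vy_{uv}$ of $S(G)$ then maps to the closed walk $a_ib_\ell a_ib_m a_i$ of the target, which traverses each of $a_ib_\ell$ and $a_ib_m$ exactly twice; its sign is therefore the square of the product of the corresponding edge-signs, namely $+1$, contradicting the preservation of the sign of closed walks. Hence $c$ must be a proper $k$-coloring, and $\chi(G)\leq k$.

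The main subtlety is ensuring the argument respects the switching (as opposed to edge-sign preserving) homomorphism framework. Since signs of closed walks are invariant under switching and are preserved by any homomorphism, the negative $4$-cycles of $S(G)$ necessarily map to negatively signed closed walks in the target, independent of which equivalent signature of $S(G)$ is chosen to represent it. The remaining technical point is the componentwise side-swap used to place $f(V(G))$ on the $a$-side; this is always possible thanks to the symmetric structure of $(K_{k,k},M)$ and $(K_{2,2},e)$.
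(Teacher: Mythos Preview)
The paper does not include its own proof of this theorem; it is quoted from \cite{NRS15} without argument. Your proposal is correct and follows the natural direct approach one would expect: build an explicit homomorphism from a proper coloring in one direction, and in the other direction read off a coloring from the images of the $V(G)$-side and use the negativity of the gadget $4$-cycles to rule out monochromatic edges. The side-swap automorphism $a_i\leftrightarrow b_i$ of $(K_{k,k},M)$ (and of $(K_{2,2},e)$) does preserve the signature, so your normalization step is legitimate, and your closed-walk parity argument for the contradiction is sound (it also covers the degenerate cases $\ell=m$ or $\ell=m=i$, since each edge is then traversed an even number of times).
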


As the problem of mapping signed graphs to $(K_{k,k},M)$ could capture the problem of the coloring of ordinary graphs, signed graphs $(K_{k,k},M)$ are of special interests in the study of the homomorphism of signed graphs. In particular, when $k=4$, we have a restatement of $4$-Color Theorem as follows.

\begin{theorem}\label{thm:Restated4CT}
	For any planar graph $G$, $S(G)\to (K_{4,4},M)$. 
\end{theorem}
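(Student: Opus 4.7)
The plan is to observe that this theorem is an immediate consequence of Theorem~\ref{thm:X->S(G)} (specialized to $k=4$) together with the $4$-Color Theorem itself, so no new combinatorial argument is required; the statement is genuinely a \emph{reformulation} of $4$CT rather than an independent result.

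More precisely, I would proceed in two short steps. First, I would invoke the second bullet of Theorem~\ref{thm:X->S(G)} with $k=4$, which gives the biconditional
\[
\chi(G)\leq 4 \iff S(G)\to (K_{4,4},M).
\]
Second, I would apply the $4$-Color Theorem to conclude that for any planar graph $G$ one has $\chi(G)\leq 4$, and then read off $S(G)\to (K_{4,4},M)$ from the forward direction of the above equivalence.

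There is essentially no obstacle: the only nontrivial content lies inside Theorem~\ref{thm:X->S(G)}, which is cited from \cite{NRS15}, and inside the $4$-Color Theorem, which is quoted as a black box. If I wanted to make the proof completely self-contained, the one thing worth spelling out is the forward direction of Theorem~\ref{thm:X->S(G)} in the case $k=4$: given a proper $4$-coloring $c\colon V(G)\to V(K_{4,4})=\{a_1,a_2,a_3,a_4\}\cup\{b_1,b_2,b_3,b_4\}$ (where $M$ is the matching $\{a_ib_i\}$), one extends $c$ to the subdivision vertices $x_{uv},y_{uv}$ of $S(G)$ by sending each of them to a suitably chosen vertex in the opposite part of $K_{4,4}$ so that the two edges of each negative $4$-cycle $ux_{uv}vy_{uv}$ are mapped consistently with their signs; the fact that $c(u)\neq c(v)$ leaves enough flexibility in $K_{4,4}$ (with the matching $M$ providing the unique negative edges) to realize exactly one negative $4$-cycle image, as required by the edge-sign preserving / switching homomorphism condition. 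With these two ingredients in place, the theorem follows in a single line.
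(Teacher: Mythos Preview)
Your proposal is correct and matches the paper's approach exactly: the paper presents Theorem~\ref{thm:Restated4CT} without a separate proof, introducing it explicitly as the case $k=4$ of Theorem~\ref{thm:X->S(G)} combined with the $4$-Color Theorem, precisely as you do. The only minor slip is in your optional self-contained sketch, where you write $c\colon V(G)\to V(K_{4,4})$; a proper $4$-coloring has a $4$-element codomain, so you would send $V(G)$ into one side $\{a_1,a_2,a_3,a_4\}$ of $K_{4,4}$ and then place the subdivision vertices $x_{uv},y_{uv}$ in the other side.
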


Notice that the planarity is preserved when we construct $S(G)$ from a planar graph $G$. 
Moreover, based on an edge-coloring result of B. Guenin \cite{G03} which in turn is based on the $4$-Color Theorem, the following strengthening of the 4-Color Theorem has been proved in \cite{NRS15}: Every signed planar graph $(G, \sigma)$ satisfying that $g_{ij}(G, \sigma)\geq g_{ij} (K_{4,4}, M)$ for $ij\in \mathbb{Z}^2_2$ maps to $(K_{4,4},M)$. A signed graph $(G, \sigma)$ satisfies the conditions $g_{ij}(G, \sigma)\geq g_{ij} (K_{4,4}, M)$ if and only if its underlying graph is bipartite and it has no digon. The next theorem is a reformulation of this strengthening of the 4-Color Theorem.

\begin{theorem}\label{thm:4CTStrengthened}
Every signed bipartite planar simple graph admits a homomorphism to $(K_{4,4,} M)$.
\end{theorem}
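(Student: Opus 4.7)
The plan is to obtain this theorem as a direct reformulation of the $g_{ij}$-strengthening of the $4$-Color Theorem recalled in the paragraph immediately preceding the statement, namely that every signed planar graph $(G,\sigma)$ satisfying $g_{ij}(G,\sigma)\geq g_{ij}(K_{4,4},M)$ for every $ij\in\mathbb{Z}_2^2$ admits a homomorphism to $(K_{4,4},M)$. The only work is then to verify that, inside the class of signed planar graphs, these four numerical inequalities carve out exactly the subclass of signed bipartite simple graphs.

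First I would record the values $g_{ij}(K_{4,4},M)$. Since $K_{4,4}$ is bipartite and loopless, every closed walk in it has even length, so $g_{01}(K_{4,4},M)=g_{11}(K_{4,4},M)=\infty$. For $g_{00}$, any edge traversed twice is a closed walk of length $2$ with an even number of negative edges, giving $g_{00}(K_{4,4},M)=2$. For $g_{10}$, since $(K_{4,4},M)$ has neither loops nor digons, no closed walk of length at most $2$ can carry an odd number of negative edges, while a $4$-cycle using exactly one edge of the matching $M$ does; hence $g_{10}(K_{4,4},M)=4$.

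Next I would translate the four inequalities applied to an arbitrary signed planar graph $(G,\sigma)$. The inequality $g_{00}(G,\sigma)\geq 2$ is automatic for every signed graph. The conditions $g_{01}(G,\sigma)=\infty$ and $g_{11}(G,\sigma)=\infty$ together say that $(G,\sigma)$ admits no closed walk of odd length, i.e.\ $G$ is bipartite and, in particular, loopless. Finally, $g_{10}(G,\sigma)\geq 4$ forbids closed walks of length at most $2$ carrying an odd number of negative edges; the only possible such walks come from negative loops or from digons, and since we already know $G$ has no loops, this last inequality reduces to the requirement that $(G,\sigma)$ contain no digon. Altogether, the four inequalities hold precisely when $(G,\sigma)$ is bipartite and simple in the sense used in this paper.

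Combining this equivalence with the strengthening of the $4$-Color Theorem from \cite{NRS15} yields Theorem~\ref{thm:4CTStrengthened}. There is no hidden difficulty in the argument: all of the substance is carried by the $4$-Color Theorem together with Guenin's edge-colouring result \cite{G03}, which enter the proof of the strengthening in \cite{NRS15} as black boxes, and the present step is a bookkeeping exercise with the $g_{ij}$-parameters.
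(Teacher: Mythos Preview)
Your proposal is correct and follows exactly the route indicated in the paper: the paper does not give a separate proof of Theorem~\ref{thm:4CTStrengthened} but presents it as a reformulation of the $g_{ij}$-strengthening from \cite{NRS15}, asserting in the preceding paragraph that the conditions $g_{ij}(G,\sigma)\geq g_{ij}(K_{4,4},M)$ hold if and only if $G$ is bipartite and has no digon. Your computation of the four $g_{ij}(K_{4,4},M)$ values and the subsequent unpacking of the inequalities is precisely the bookkeeping the paper leaves implicit.
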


The $4$-Color Theorem states that every planar graph admits a homomorphism to $K_4$, while the Gr\"otzsch theorem states that every planar graph of girth at least $4$ maps to $K_3$, noting that $K_3$ is a subgraph of $K_4$. Motivated by this observation, Theorem~\ref{thm:X->S(G)}, and Theorem~\ref{thm:4CTStrengthened}, it is quite natural to ask for which families of signed bipartite planar graphs admit $(K_{3,3}, M)$ as a homomorphism bound. Observe that here $(K_{3,3},M)$ is a subgraph of $(K_{4,4}, M)$. We further note that, considering Theorem~\ref{thm:X->S(G)}, the problem of mapping signed bipartite planar graphs to $(K_{3,3}, M)$ captures the $3$-coloring problem of (ordinary) planar graphs. In this work, among other results, we prove the following theorem in Section~\ref{sec:CircularChromaticNumber_3} noting that our proof is based on the $4$-Color Theorem.

\begin{theorem}\label{thm:main}
Every signed bipartite planar graph of negative girth at least $6$ admits a homomorphism to $(K_{3,3}, M)$. Moreover, the girth condition is best possible.
\end{theorem}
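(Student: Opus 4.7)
The theorem has two assertions, the homomorphism bound and the sharpness of the girth condition, and I would handle sharpness first since it is much the easier direction.

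For sharpness, I would exhibit the signed bipartite planar graph $S(K_4)$. Since $K_4$ is planar, $S(K_4)$ is planar as well, and by the very construction of $S(\cdot)$ each edge of $K_4$ produces a negative $4$-cycle in $S(K_4)$, so its negative girth is exactly $4$. By Theorem~\ref{thm:X->S(G)}, the existence of a homomorphism $S(K_4)\to (K_{3,3}, M)$ is equivalent to $\chi(K_4)\leq 3$, which fails. Since in a bipartite graph all cycles have even length, the next possible value of the negative girth above $4$ is $6$, and hence the hypothesis ``at least $6$'' cannot be weakened.

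For the upper bound, I would argue by contradiction: let $(B, \sigma)$ be a counterexample minimizing $|V(B)|+|E(B)|$. Standard minimality reductions show that $B$ is simple, $2$-connected, has minimum degree at least $2$, and contains no local substructure whose presence would immediately extend a homomorphism from a strictly smaller signed graph. The core of the proof is then a discharging pass on a plane embedding of $B$: assign each vertex $v$ the initial charge $d(v)-4$ and each face $f$ the charge $\ell(f)-4$, so that by Euler's formula the total charge is $-8$. Two constraints drive the redistribution: bipartiteness gives $\ell(f)\geq 4$ for every face, and the hypothesis $g_{10}(B,\sigma)\geq 6$ gives $\ell(f)\geq 6$ for every face whose boundary cycle is negative. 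Carefully chosen discharging rules transferring charge from long, or necessarily negative, faces and from high-degree vertices toward deficient elements should force the existence of a small, identifiable local configuration.

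The technical heart of the proof, and the main obstacle, is then the reducibility of these configurations. For each one produced by the discharging, I would delete or contract it to obtain a smaller signed bipartite planar graph $(B',\sigma')$ whose negative girth is still at least $6$, invoke the minimality of $(B,\sigma)$ to get a homomorphism $(B',\sigma')\to (K_{3,3},M)$, and extend this homomorphism back across the configuration. Here the $4$-Color Theorem is expected to enter via Theorem~\ref{thm:4CTStrengthened}, applied to an auxiliary signed bipartite planar gadget attached to the boundary of the configuration: this produces a homomorphism into $(K_{4,4},M)$, which is then post-processed using the embedding $(K_{3,3},M) \hookrightarrow (K_{4,4},M)$ as the deletion of a single matched pair. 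The delicate point is to match signs at the boundary of the configuration while performing this post-processing, for which the coincidence of the two notions of homomorphism modulo switching on $(K_{3,3},M)$ and its symmetric local structure give exactly the flexibility needed; it is on this step that the bulk of the case analysis concentrates.
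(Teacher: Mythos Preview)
Your sharpness argument via $S(K_4)$ is fine and matches the spirit of the paper's setup.

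For the upper bound, however, your proposal is not a proof but a hopeful outline, and it diverges completely from the paper's actual argument. You propose a minimal-counterexample-plus-discharging scheme, yet you give no discharging rules, no list of unavoidable configurations, and no reducibility arguments; ``carefully chosen discharging rules \ldots\ should force the existence of a small, identifiable local configuration'' is an aspiration, not a step. The suggested mechanism for invoking the $4$-Color Theorem, namely applying Theorem~\ref{thm:4CTStrengthened} to an unspecified auxiliary gadget and then ``post-processing'' a $(K_{4,4},M)$-homomorphism down to $(K_{3,3},M)$, is particularly speculative: there is no retraction of $(K_{4,4},M)$ onto $(K_{3,3},M)$, so dropping a matched pair of vertices is not a well-defined operation on homomorphic images, and you have not explained how boundary signs would be reconciled.

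The paper's proof is short and structurally quite different. It invokes a signature-packing theorem (Theorem~\ref{thm:packing}, ultimately resting on the $4$-Color Theorem): a signed bipartite planar graph of negative girth at least $6$ admits six pairwise disjoint edge sets $E_1,\ldots,E_6$, each of which is the negative edge set of a signature equivalent to $\sigma$. One then contracts all edges of $E_1$; every odd cycle of the contracted graph comes from a negative cycle of $(G,\sigma_1)$ and must contain an odd number of edges from each of $E_2,\ldots,E_6$, hence has length at least $5$. Gr\"otzsch's theorem now gives a proper $3$-colouring of the contracted graph, and reading this colouring against the bipartition of $G$ yields directly an edge-sign preserving homomorphism of $(G,\sigma_1)$ to $(K_{3,3},M)$. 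No discharging, no reducible-configuration analysis, and the role of the $4$-Color Theorem is entirely encapsulated in the packing result.
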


\section{Circular chromatic number of signed graphs}\label{sec:X_c}

Given a signed graph $(G, \sigma)$ and a positive integer $k$, a \emph{$0$-free $2k$-coloring} of $(G, \sigma)$ (introduced in \cite{Za82}) is a mapping $f: V(G) \to \{\pm 1, \pm 2, \ldots, \pm k\}$ such that for any edge $e=uv$, $f(u) \ne \sigma(e) f(v)$. The notion of the circular coloring of signed graphs defined in \cite{NWZ21} is a common extension of circular coloring of graphs and $0$-free $2k$-coloring of signed graphs.

For a real number $r \ge 1$, let $C^r$ be a circle of circumference $r$. For two points $x, y$ on $C^r$, the {\rm distance} between $x$ and $y$ on $C^r$, denoted $d_{C^r}(x,y)$, is the length of the shorter arc of $C^r$ connecting $x$ and $y$. For each point $x$ on $C^r$, the {\em antipodal} of $x$, denoted by $\bar{x}$, is the unique point at distance $\frac{r}{2}$ from $x$. 

Given a real number $r$, a \emph{circular $r$-coloring} of a signed graph $(G, \sigma)$ is a mapping $\varphi: V(G) \to C^r$ such that 
\begin{itemize}
\item for each positive edge $uv$ of $(G, \sigma)$,  $d_{C^r}(\varphi(u),  \varphi(v)) \geq 1$;
\item for each negative edge $uv$ of $(G, \sigma)$, $d_{C^r}(\varphi(u),  \overbar{\varphi(v)}) \geq 1$.
\end{itemize}
The \emph{circular chromatic number} of a signed graph $(G, \sigma)$ is defined as $$\chi_c(G, \sigma) = \inf \{r \ge 1: (G, \sigma) \text{ admits a circular $r$-coloring}\}.$$

For integers $p \ge 2q > 0$ such that $p$ is even, the \emph{signed circular clique} $K_{p;q}^s$ has the vertex set $[p] = \{0,1,\ldots, p-1\}$, in which $ij$ is a positive edge if and only if  $q \leq |i-j| \leq p-q$ and $ij$ is a negative edge if and only if either $|i-j| \leq \frac{p}{2}-q$ or $|i-j| \geq \frac{p}{2}+q$. Moreover, let $\hat{K}_{p;q}^s$ be the signed subgraph of $K_{p;q}^s$ induced by vertices $\{0,1,\ldots, \frac{p}{2}-1\}$. In this definition loops are allowed, and indeed, by the definition, there will be a negative loop on each vertex but there will be no positive loop. As shown in \cite{NWZ21}, the following statements are equivalent:
\begin{itemize}
\item $(G, \sigma)$ admits a circular $\frac{p}{q}$-coloring;
\item $(G, \sigma)$ admits an edge-sign preserving homomorphism to $K_{p;q}^s$;
\item $(G, \sigma)$ admits a switching homomorphism to $\hat{K}_{p;q}^s$.
\end{itemize}

In other words, in the order induced by edge-sign preserving homomorphism on the class of all signed graphs, the circular chromatic number of a signed graph $(G, \sigma)$ is the smallest value of a rational number $\frac{p}{q}$ such that $(G, \sigma) \spto K_{p;q}^s$. Normally we choose the minimal element (the \emph{core}) of each homomorphically equivalent class to present the class. In such cases then we will choose $K_{p;q}^s$ where $p$ is an even integer and, with respect to this condition, $\frac{p}{q}$ is in its simplest form, e.g. $K^s_{16;5}$ or $K^s_{8;2}$. Observe that in $K_{p;q}^s$ if we apply a switching at a vertex $i$, $i\leq \frac{p}{2}-1$, then we get a copy of the vertex $i+\frac{p}{2}$. Furthermore, $K_{p;q}^s={\rm DSG}(\hat{K}_{p;q}^s)$. Thus $\hat{K}_{p;q}^s$ is a homomorphic image of $K_{p;q}^s$ with respect to the (switching) homomorphism. Furthermore, with the same assumption on $p$ and $q$, $\hat{K}_{p;q}^s$ is a core. 

The next lemma is a straightforward consequence of the transitivity of the homomorphism relation.

\begin{lemma}\label{lem:No-HomByCircularChromatic}
If $(G,\sigma)\to (H, \pi)$, then $\chi_c(G,\sigma) \leq \chi_c(H, \pi)$.
\end{lemma}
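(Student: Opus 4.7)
The plan is to reduce the question to the standard composition argument for homomorphisms. Fix any real $r > \chi_c(H,\pi)$; by the infimum definition, $(H,\pi)$ admits a circular $r$-coloring $\varphi : V(H)\to C^r$. I would like to produce a circular $r$-coloring of $(G,\sigma)$; once this is done for every such $r$, taking the infimum yields $\chi_c(G,\sigma)\leq \chi_c(H,\pi)$.

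The first step is to unfold the definition of switching homomorphism: $(G,\sigma)\to(H,\pi)$ means that there exists a signature $\sigma'$ switching-equivalent to $\sigma$ and an edge-sign preserving homomorphism $h:(G,\sigma')\spto(H,\pi)$. Composing the two maps gives $\varphi\circ h : V(G)\to C^r$, and this composition is a circular $r$-coloring of $(G,\sigma')$ because $h$ sends positive (resp.\ negative) edges to positive (resp.\ negative) edges and $\varphi$ respects the distance constraints on each sign class. So the task reduces to transferring a circular $r$-coloring from $(G,\sigma')$ to $(G,\sigma)$.

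The key observation for this last step is that circular colorings transform naturally under switching: switching at a vertex $v$ in the signature exactly corresponds to replacing the color of $v$ by its antipodal $\overbar{\varphi(v)}$. Indeed, along any edge $uv$ whose sign flips, the required constraint after switching involves $\overbar{\overbar{\varphi(v)}}=\varphi(v)$, which is precisely the original constraint before switching. Applying this antipodal swap at the (set of) vertices where $\sigma$ and $\sigma'$ differ produces a circular $r$-coloring of $(G,\sigma)$, hence $\chi_c(G,\sigma)\leq r$; letting $r\downarrow \chi_c(H,\pi)$ finishes the argument.

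There is no real obstacle here; the only thing to double-check is the antipodal-swap bookkeeping when several switchings are composed, but this is immediate because switching at a vertex twice is the identity on both the signature and on the color $\overbar{\overbar{\varphi(v)}}=\varphi(v)$. An even shorter alternative, using the equivalences already recorded in the excerpt, is to pick a rational $\frac{p}{q}$ with $\chi_c(H,\pi)<\frac{p}{q}<r$ (with $p$ even), note that $(H,\pi)\to \hat{K}^s_{p;q}$, compose with $(G,\sigma)\to(H,\pi)$ by transitivity to obtain $(G,\sigma)\to \hat{K}^s_{p;q}$, and conclude $\chi_c(G,\sigma)\leq \frac{p}{q}$.
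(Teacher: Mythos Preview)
Your argument is correct. The paper does not give a formal proof of this lemma, only the remark that it is ``a straightforward consequence of the transitivity of the homomorphism relation''; your short alternative at the end is precisely that one-line argument, and your main proof simply unpacks the same idea at the level of explicit colorings and switchings.
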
 

Let $D$ be a digon. It follows immediately that every signed bipartite graph admits an (edge-sign preserving) homomorphism to $D$ and, as $\chi_c(D)=4$, we have an upper bound of $4$ for the circular chromatic number of signed bipartite graphs. However, the restriction of the problem to this subclass of signed graphs is still of high interest as shown by the following result of \cite{NWZ21}.

\begin{theorem}\label{thm:X(G)-->X(S(G))}
Given a graph $G$, we have $$\chi_c(S(G)) = 4-\dfrac{4}{\chi_c(G)+1}.$$
\end{theorem}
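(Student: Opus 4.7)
The plan is to prove the equality by establishing both inequalities. Set $r = \chi_c(G)$ and $s = 4r/(r+1)$ so that the target value is $s = 4 - 4/(r+1)$. The first step is a geometric lemma about the building block of $S(G)$: any valid circular $s$-coloring $\psi$ of $S(G)$ forces, for every edge $uv \in E(G)$, the interval constraint
\[
d_{C^s}(\psi(u), \psi(v)) \in [\,2 - s/2,\; s - 2\,].
\]
To prove this I would fix a representative signature of the negative $4$-cycle $u\,x_{uv}\,v\,y_{uv}$ (say with $ux_{uv}$, $x_{uv}v$, $vy_{uv}$ positive and $y_{uv}u$ negative). Existence of $\psi(x_{uv})$ at distance $\ge 1$ from both $\psi(u), \psi(v)$ is equivalent to the two open arcs of radius $1$ around $\psi(u), \psi(v)$ not covering $C^s$, i.e.\ $d_{C^s}(\psi(u), \psi(v)) \le s - 2$; existence of $\psi(y_{uv})$ at distance $\ge 1$ from $\psi(v)$ and at most $s/2 - 1$ from $\psi(u)$ is equivalent to the arcs of radius $1$ around $\psi(v)$ and $\overline{\psi(u)}$ not covering $C^s$, which gives $d_{C^s}(\psi(u), \psi(v)) \ge 2 - s/2$. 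The converse (that this interval is also sufficient to complete $\psi(x_{uv}), \psi(y_{uv})$) is immediate by picking a point from a nonempty arc.

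For the upper bound $\chi_c(S(G)) \le 4 - 4/(r+1)$, I would take a circular $r$-coloring $\varphi : V(G) \to C^r \cong [0, r)$ and define $\psi(u) = \tfrac{2}{r+1}\varphi(u)$, placing all images inside the half-circle $[0, s/2) \subset C^s$. Since $d_{C^r}(\varphi(u), \varphi(v)) \ge 1$ translates to $|\varphi(u) - \varphi(v)| \in [1, r-1]$ as real numbers in $[0, r)$, rescaling by $2/(r+1)$ gives $|\psi(u) - \psi(v)| \in [2/(r+1),\, 2(r-1)/(r+1)] = [2 - s/2,\, s - 2]$; because this is $\le s/2$ and both points lie in a half-circle, it equals $d_{C^s}(\psi(u), \psi(v))$. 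The geometric lemma then supplies valid positions for $x_{uv}, y_{uv}$.

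For the lower bound $\chi_c(G) \le \chi_c(S(G)) / (4 - \chi_c(S(G)))$, I would reverse the construction. Given a circular $s$-coloring $\psi$ of $S(G)$, set $r = s/(4-s)$ and scale $\psi|_{V(G)}$ by the factor $(r+1)/2$ to land on $C^{2r}$ (using the identity $s(r+1)/2 = 2r$); by the geometric lemma, edge-distances now lie in $[1, r-1]$. Then compose with the natural double-cover projection $\pi : C^{2r} \to C^r$ that identifies each point with its antipode, obtaining $\varphi = \pi \circ (\text{scaling}) \circ \psi$. Since the two preimages of any point of $C^r$ are antipodal in $C^{2r}$ at distance $r$, the quotient distance satisfies $d_{C^r}(\pi(a), \pi(b)) = \min\bigl(d_{C^{2r}}(a,b),\, r - d_{C^{2r}}(a,b)\bigr)$, and for edge-distances in $[1, r-1]$ this minimum is $\ge 1$, making $\varphi$ a circular $r$-coloring of $G$.

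The main obstacle I anticipate is the clean verification of the geometric lemma, in particular ensuring that the feasibility criteria for $\psi(x_{uv})$ and $\psi(y_{uv})$ reduce to the same interval on $d_{C^s}(\psi(u), \psi(v))$ regardless of which representative signature of the negative $4$-cycle is chosen (switching at $x_{uv}$ or $y_{uv}$ permutes the two roles but preserves feasibility, so up to this symmetry one interval condition captures everything). Once the lemma is in place, the two directions are precisely the scaling $t \mapsto \tfrac{2}{r+1}t$ and the scaling-then-quotient $t \mapsto (\tfrac{r+1}{2}t) \bmod r$, respectively, and the equality $\chi_c(S(G)) = 4 - 4/(\chi_c(G)+1)$ follows from the two inequalities combined.
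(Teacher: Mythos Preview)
Your argument is correct. The paper does not itself prove this theorem---it is quoted from \cite{NWZ21}---so there is no in-paper proof to match against line by line. The route the paper \emph{suggests} (in the subsection on subdivisions) is indirect: write $S(G)=T^*_2(\tilde G)$, where $\tilde G$ is obtained from $G$ by replacing every edge with a digon, apply Lemma~\ref{lem:X_c(T_l)} with $\ell=2$ to obtain $\chi_c(S(G))=\dfrac{4\chi_c(\tilde G)}{\chi_c(\tilde G)+2}$, and then invoke the separate identity $\chi_c(\tilde G)=2\chi_c(G)$. Your argument bypasses this two-step detour by computing directly the feasibility interval $[2-\tfrac{s}{2},\,s-2]$ for the poles of the negative $4$-cycle gadget, then using the rescaling $t\mapsto\tfrac{2}{r+1}t$ for the upper bound and the scaling-plus-antipodal-quotient $C^s\to C^{2r}\to C^r$ for the lower bound. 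In the indicator language of Lemma~\ref{lem:indicator}, your geometric lemma is precisely the computation of $Z(\mathcal{I})$ for the negative $4$-cycle; because that zero set does not have the special form $[t,\tfrac{s}{2}]$ assumed there, the paper's framework cannot be applied to $G$ itself and must pass through $\tilde G$ first. Your direct approach is the more economical one for this particular construction, and the signature worry you flag is harmless: switching at the internal vertices $x_{uv},y_{uv}$ affects no other gadget, so one may normalise every $4$-cycle to your chosen form without loss of generality.
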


That is equivalent to: $\chi_c(G) = \frac{\chi_c(S(G))}{4-\chi_c(S(G))}$. In particular, we have that 
\begin{itemize}
\item $\chi_c(G)\leq 4$ if and only if $\chi_c(S(G))\leq \frac{16}{5}$,
\item $\chi_c(G)\leq 3$ if and only if $\chi_c(S(G))\leq 3$.
\end{itemize}
In the next section, we study the restriction of the circular chromatic number to the class of signed bipartite graphs and especially, we introduce the signed bipartite circular clique.

\subsection{Signed bipartite circular clique $B_{p;q}$}
One may view the class of signed circular cliques $K_{p;q}^s$ or $\hat{K}_{p;q}^s$ as a representation of rational numbers in the homomorphism order of the class of all signed graphs. Then the circular chromatic number of a signed graph $(G, \sigma)$ is determined by the first element of this chain (representing rational numbers) which is larger than $(G,\sigma)$ with respect to the homomorphism order. 

In Theorems~\ref{thm:S(G)->S(H)}, \ref{thm:X->S(G)} and \ref{thm:X(G)-->X(S(G))}, we have seen the importance of the restriction of the study into the subclass $\mathcal{SB}$ of signed bipartite graphs. A natural question to ask is if the homomorphism order restricted to this subclass behaves similarly? More precisely, we would like to know if there is a chain of signed bipartite graphs in the homomorphism order on $\mathcal{SB}$ which plays the role of circular clique? 

We note that no signed circular clique $\hat{K}_{p;q}^s$ or $K_{p;q}^s$ is bipartite. Indeed each vertex in any of these cliques has a negative loop on it. In this section, for $\frac{p}{q}\leq 4$, we introduce a bipartite subgraph of these circular cliques that plays the role of circular clique in the restricted class $\mathcal{SB}$. 
 
\begin{definition}
Given a rational number $\frac{p}{q}$ where $p$ is an even number, $2\leq \frac{p}{q}\leq 4$ and subject to these conditions $\frac{p}{q}$ is in its simplest form, we define the signed graph $B_{p;q}$ to be the following subgraph of $K^s_{p;q}$: The vertex set $[p]=\{0,1, \ldots, p-1\}$ is partitioned to two parts $X$ and $Y$ where $X=\{0,2, \ldots, p-2\}$ and $Y=\{1,3, \ldots, p-1\}$. The edge set is formed by the edges of $K^s_{p;q}$ which have exactly one endpoint in $X$ and another endpoint in $Y$. The signs of edges are also induced by $K^s_{p;q}$. 
\end{definition}

We will show that $B_{p;q}$, which itself is a signed bipartite graph, plays the roll of circular clique in the subclass of signed bipartite graphs. However, this class of signed graphs is partitioned into two subclasses depending on whether $p$ is a multiple of $4$ or it is 2 $\pmod 4$. 

When $p$ is a multiple of $4$, then we will show that $B_{p;q}$ is a circular clique with respect to the edge-sign preserving homomorphism.  It means that any signed bipartite graph of circular chromatic number at most $\frac{p}{q}$ admits an edge-sign preserving homomorphism to $B_{p;q}$. In this case, as we will show, the subgraph induced on the vertices $[\frac{p}{2}]=\{0,1, \ldots, \frac{p}{2}-1\}$ forms the core of $B_{p;q}$ and will play the role of signed bipartite clique with respect to the switching homomorphism. For example, $B_{16;5}$ is depicted in Figure~\ref{fig:B16;5} and its switching core, which is a signed graph on $K_{4,4}$, is depicted in Figure~\ref{fig:hatB16;5}. 

\begin{figure}[htbp]
	\centering
	\begin{minipage}{.45\textwidth}
		\centering
		\begin{tikzpicture}
			[scale=.22]		
			\foreach \i in {1,3,5,7,9,11,13,15}
			{
				\draw[rotate=360-22.5*(\i)] (0, 12) node[circle, fill=lightgray, draw=black!80, inner sep=0mm, minimum size=3.3mm] (x\i){\scriptsize ${\i}$};
			}
			
			\foreach \i in {0, 2,4,6,8,10,12,14}
			{
				\draw[rotate=360-22.5*(\i)] (0, 12) node[circle, draw=black!80, inner sep=0mm, minimum size=3.3mm] (x\i){\scriptsize ${\i}$};
			}

			\foreach \i/\j in {1/6,1/8,1/10,1/12}
			{
				\draw  [bend left=18, line width=0.4mm, blue] (x\i) -- (x\j);
			}
			
			\foreach \i/\j in {1/2,1/4,1/14,1/0}
			{
				\draw  [bend left=18, densely dotted, line width=0.4mm, red] (x\i) -- (x\j);
			}	
			
			\foreach \i/\j in {2/7,2/9,2/11,2/13}
			{
				\draw  [bend left=18, line width=0.4mm, blue] (x\i) -- (x\j);
			}
			
			\foreach \i/\j in {2/3,2/5,2/15}
			{
				\draw  [bend left=18, densely dotted, line width=0.4mm, red] (x\i) -- (x\j);
			}	
			
			\foreach \i/\j in {3/8,3/10,3/12,3/14}
			{
				\draw  [bend left=18, line width=0.4mm, blue] (x\i) -- (x\j);
			}
			
			\foreach \i/\j in {3/4,3/6,3/0}
			{
				\draw  [bend left=18, densely dotted, line width=0.4mm, red] (x\i) -- (x\j);
			}	
			
			\foreach \i/\j in {4/9,4/11,4/13,4/15}
			{
				\draw  [bend left=18, line width=0.4mm, blue] (x\i) -- (x\j);
			}
			
			\foreach \i/\j in {4/5,4/7}
			{
				\draw  [bend left=18, densely dotted, line width=0.4mm, red] (x\i) -- (x\j);
			}	
			
			\foreach \i/\j in {5/10,5/12,5/14,5/0}
			{
				\draw  [bend left=18, line width=0.4mm, blue] (x\i) -- (x\j);
			}
			
			\foreach \i/\j in {5/6,5/8}
			{
				\draw  [bend left=18, densely dotted, line width=0.4mm, red] (x\i) -- (x\j);
			}	
			
			\foreach \i/\j in {6/11,6/13,6/15}
			{
				\draw  [bend left=18, line width=0.4mm, blue] (x\i) -- (x\j);
			}
			
			\foreach \i/\j in {6/7,6/9}
			{
				\draw  [bend left=18, densely dotted, line width=0.4mm, red] (x\i) -- (x\j);
			}	
			
			\foreach \i/\j in {7/12,7/14,7/0}
			{
				\draw  [bend left=18, line width=0.4mm, blue] (x\i) -- (x\j);
			}
			
			\foreach \i/\j in {7/8,7/10}
			{
				\draw  [bend left=18, densely dotted, line width=0.4mm, red] (x\i) -- (x\j);
			}	
			
			\foreach \i/\j in {8/13,8/15,9/14,9/0,10/15,11/0}
			{
				\draw  [bend left=18, line width=0.4mm, blue] (x\i) -- (x\j);
			}
			
			\foreach \i/\j in {8/9,8/11,9/10,10/11,11/12,12/13,13/14,14/15,15/0,9/12,10/13,11/14,12/15,13/0}
			{
				\draw  [bend left=18, densely dotted, line width=0.4mm, red] (x\i) -- (x\j);
			}		
		\end{tikzpicture}
		\caption{$B_{16;5}$}
		\label{fig:B16;5}
	\end{minipage}
	\begin{minipage}{.45\textwidth}
		\centering
		\begin{tikzpicture}
			[scale=.22]		
			\foreach \i in {1,3,5,7,9}
			{
				\draw[rotate=360-36*(\i)] (0, 12) node[circle, fill=lightgray, draw=black!80, inner sep=0mm, minimum size=3.3mm] (x\i){\scriptsize ${\i}$};
			}
			
			\foreach \i in {0,2,4,6,8}
			{
				\draw[rotate=360-36*(\i)] (0, 12) node[circle, draw=black!80, inner sep=0mm, minimum size=3.3mm] (x\i){\scriptsize ${\i}$};
			}
			
			\foreach \i/\j in {1/4,1/6,1/8}
			{
				\draw  [bend left=18, line width=0.4mm, blue] (x\i) -- (x\j);
			}
			
			\foreach \i/\j in {1/2,1/0}
			{
				\draw  [bend left=18, densely dotted, line width=0.4mm, red] (x\i) -- (x\j);
			}	
			
			\foreach \i/\j in {3/6,3/8,3/0}
			{
				\draw  [bend left=18, line width=0.4mm, blue] (x\i) -- (x\j);
			}
			
			\foreach \i/\j in {3/2,3/4}
			{
				\draw  [bend left=18, densely dotted, line width=0.4mm, red] (x\i) -- (x\j);
			}	
			
			\foreach \i/\j in {5/8,5/0,5/2}
			{
				\draw  [bend left=18, line width=0.4mm, blue] (x\i) -- (x\j);
			}
			
			\foreach \i/\j in {5/4,5/6}
			{
				\draw  [bend left=18, densely dotted, line width=0.4mm, red] (x\i) -- (x\j);
			}	
			
			\foreach \i/\j in {7/0,7/2,7/4}
			{
				\draw  [bend left=18, line width=0.4mm, blue] (x\i) -- (x\j);
			}
			
			\foreach \i/\j in {7/6,7/8}
			{
				\draw  [bend left=18, densely dotted, line width=0.4mm, red] (x\i) -- (x\j);
			}	
			
			\foreach \i/\j in {9/2,9/4,9/6}
			{
				\draw  [bend left=18, line width=0.4mm, blue] (x\i) -- (x\j);
			}
			
			\foreach \i/\j in {9/8,9/0}
			{
				\draw  [bend left=18, densely dotted, line width=0.4mm, red] (x\i) -- (x\j);
			}			
		\end{tikzpicture}
		\caption{$B_{10;3}$}
		\label{fig:B10;3}
	\end{minipage}
\end{figure}

When $p\equiv 2 \; \pmod 4$, and again noting our assumption that $\frac{p}{q}$ is in its simplest form subject to $p$ being even, the signed graph $B_{p;q}$ is already a core with respect to the switching homomorphism. For example, $B_{10;3}$ is depicted in Figure~\ref{fig:B10;3}. In this case then to have a signed circular clique with respect to the edge-sign preserving homomorphism, we must consider DSG$(B_{p;q})$. To be more precise, when $p\equiv 2 \; \pmod 4$, for a signed bipartite graph $(G, \sigma)$ to satisfy that $\chi_c(G, \sigma)\leq \frac{q}{q}$, it is necessary and sufficient that $(G, \sigma)$ admits a (switching) homomorphism to $B_{p;q}$. However, for some choices of $\sigma$, this homomorphism might not be an edge-sign preserving homomorphism and a switching might be necessary. To be sure to have an edge-sign preserving homomorphism then we must consider DSG$(B_{p;q})$. For the example of $p=6$ and $q=2$, which corresponds to circular chromatic number at most $3$, see Figures~\ref{fig:hatK62}, \ref{fig:K62}, \ref{fig:B62} and \ref{fig:DSGB62}. The first one,  the signed graph of Figure~\ref{fig:hatK62} on three vertices, is the signed circular $3$-clique with respect to the switching homomorphism. The second one, the signed graph of Figure~\ref{fig:K62}, which is the Double Switch Graph of the first one, is the signed circular $3$-clique with respect to the edge-sign preserving homomorphism. The third one, the signed graph of Figure~\ref{fig:B62}, also on $6$ vertices, is the signed bipartite circular $3$-clique with respect to the switching homomorphism. Finally the last one, the signed graph of Figure~\ref{fig:DSGB62}, on $12$ vertices, is the Double Switch Graph of the previous one and is the signed bipartite circular $3$-clique with respect to the edge-sign preserving homomorphism.

\begin{figure}[htbp]
	\centering
\renewcommand{\figurename}{Fig}
\captionsetup{justification=centering,margin=9mm}

	\begin{minipage}[t]{.20\textwidth}
		\centering
		\begin{tikzpicture}
			[scale=.3]
			\foreach \i in {0,1,2}
			{
				\draw[rotate=360-120*(\i)] (0, 4.5) node[circle, draw=black!80, inner sep=0mm, minimum size=3.3mm] (x\i){\scriptsize ${\i}$};
			}
			\foreach \i/\j in {1/2,2/0,0/1}
			{
				\draw  [bend left=18, line width=0.4mm, blue] (x\i) -- (x\j);
			}
			
			\foreach \i/\j in {0,1,2}
			{
				\draw[rotate=360-120*(\i)] [bend left=18, densely dotted, line width=0.4mm, red] (x\i) .. controls (3,7) and (-3,7) .. (x\i);
			}	
		
		    	\foreach \i/\j in {3}
		    {
		    	\draw[rotate=360-60*(\i)] [bend left=18, densely dotted, line width=0.4mm, white] (x\i) .. controls (3,7) and (-3,7) .. (x\i);
		    }	
		\end{tikzpicture}
		\caption{$\hat{K}^s_{6;2}$}
		\label{fig:hatK62}
	\end{minipage}\hfil
	\begin{minipage}[t]{.25\textwidth}
		\centering
		\begin{tikzpicture}
			[scale=.34]
			\foreach \i in {0,1,2,3,4,5}
			{
				\draw[rotate=360-60*(\i)] (0, 4.5) node[circle, draw=black!80, inner sep=0mm, minimum size=3.3mm] (x\i){\scriptsize ${\i}$};
			}	
			\foreach \i/\j in {1/2,2/3,3/4,4/5,5/0,0/1}
			{
				\draw  [bend left=18, densely dotted, line width=0.4mm, red] (x\i) -- (x\j);
			}
			\foreach \i/\j in {1/3,3/5,5/1,2/4,4/0,0/2,1/4,2/5,3/0}
			{
				\draw  [bend left=18, line width=0.4mm, blue] (x\i) -- (x\j);
			}
			\foreach \i/\j in {0,1,2,3,4,5}
			{
				\draw[rotate=360-60*(\i)] [bend left=18, densely dotted, line width=0.4mm, red] (x\i) .. controls (3,7) and (-3,7) .. (x\i);
			}	
		\end{tikzpicture}
		\caption{$K^s_{6;2}$}
		\label{fig:K62}
	\end{minipage}\hfil
	\begin{minipage}[t]{.25\textwidth}
		\centering
		\begin{tikzpicture}
			[scale=.36]
			\foreach \i in {1,3,5}
			{
				\draw[rotate=360-60*(\i)] (0, 5) node[circle, draw=black!80, inner sep=0mm, minimum size=3.3mm] (x\i){\scriptsize ${\i}$};
			}	
			
			\foreach \i in {0,2,4}
			{
				\draw[rotate=360-60*(\i)] (0, 5) node[circle, draw=black!80, fill=lightgray, inner sep=0mm, minimum size=3.3mm] (x\i){\scriptsize ${\i}$};
			}	
			\foreach \i/\j in {1/2,2/3,3/4,4/5,5/0,0/1}
			{
				\draw  [bend left=18, densely dotted, line width=0.4mm, red] (x\i) -- (x\j);
			}
			\foreach \i/\j in {1/4,2/5,3/0}
			{
				\draw  [bend left=18, line width=0.4mm, blue] (x\i) -- (x\j);
			}
			\foreach \i/\j in {0,1,2,3,4,5}
			{
				\draw[rotate=360-60*(\i)] [bend left=18, densely dotted, line width=0.4mm, white] (x\i) .. controls (3,7) and (-3,7) .. (x\i);
			}	
		\end{tikzpicture}
		\caption{$B_{6;2}$}
		\label{fig:B62}
	\end{minipage}\hfil
	\begin{minipage}[t]{.30\textwidth}
		\centering
		\begin{tikzpicture}
			[scale=.4]
			\foreach \i in {1,3,5}
			{
				\draw[rotate=360-60*(\i)+9] (0, 5) node[circle, draw=black!80, inner sep=0mm, minimum size=3.3mm] (x\i){\scriptsize ${\i}$};
			}	
			\foreach \i/\j in {1'/1,3'/3,5'/5}
			{
				\draw[rotate=360-60*(\j)-9] (0, 5) node[circle, draw=black!80, inner sep=0mm, minimum size=3.3mm] (x\i){\scriptsize ${\i}$};
			}	
			
			\foreach \i in {0,2,4}
			{
				\draw[rotate=360-60*(\i)+9] (0, 5) node[circle, draw=black!80, fill=lightgray, inner sep=0mm, minimum size=3.3mm] (x\i){\scriptsize ${\i}$};
			}
			\foreach \i/\j in {0'/0,2'/2,4'/4}
			{
				\draw[rotate=360-60*(\j)-9] (0, 5) node[circle, draw=black!80, fill=lightgray, inner sep=0mm, minimum size=3.3mm] (x\i){\scriptsize ${\i}$};
			}

			\foreach \i/\j in {1/2,2/3,3/4,4/5,5/0,0/1}
			{
				\draw  [bend left=18, densely dotted, line width=0.4mm, red] (x\i) -- (x\j);
			}
			\foreach \i/\j in {1/4,2/5,3/0, 1'/4',2'/5',3'/0'}
			{
				\draw  [bend left=18, line width=0.4mm, blue] (x\i) -- (x\j);
			}

			\foreach \i/\j in {1'/2,2'/3,3'/4,4'/5,5'/0,0'/1,1/2',2/3',3/4',4/5',5/0',0/1'}
			{
				\draw  [bend left=18, line width=0.4mm, blue] (x\i) -- (x\j);
			}
			\foreach \i/\j in {1'/4,2'/5,3'/0, 4'/1, 5'/2, 0'/3}
			{
				\draw  [bend left=18, densely dotted, line width=0.4mm, red] (x\i) -- (x\j);
			}
			
			\foreach \i/\j in {0,1,2,3,4,5}
			{
				\draw[rotate=360-60*(\i)] [bend left=18, densely dotted, line width=0.4mm, white] (x\i) .. controls (3,7) and (-3,7) .. (x\i);
			}	
			
		\end{tikzpicture}
		\caption{DSG$(B_{6;2})$}
		\label{fig:DSGB62}
	\end{minipage}
\end{figure}

To distinguish which of the two notions of homomorphisms we are working with, we may define $B^s_{p;q}$ and $\hat{B}^s_{p;q}$ as follows. 

Given a positive even integer $p$ and a positive integer $q$ such that subject to $p$ being even, $\frac{p}{q}$ is in its simplest form and $\frac{p}{q}\geq 2$, we define $B^s_{p;q}$ to be $B_{p;q}$ when $4 \mid p$ and to be DSG$(B_{p;q})$ when $4 \nmid p$. As mentioned before, these signed graphs play the role of signed bipartite circular clique with respect to the edge-sign preserving homomorphism. For the switching homomorphism, we define $\hat{B}^s_{p;q}$ to be $B_{p;q}$ when $4 \nmid p$ and to be the subgraph of $B_{p;q}$ induced on the vertices $\{0, \ldots, \frac{p}{2}-1\}$ when $4 \mid p$. 

We should also note that in defining $K^s_{p;q}$ and $\hat{K}^s_{p;q}$, we did not need to assume $\frac{p}{q}$ is in the simplest form, however, we note that $K^s_{ap;aq}$ and $\hat{K}^s_{ap;aq}$ map, respectively, to $K^s_{p;q}$ and $\hat{K}^s_{p;q}$. By taking such a homomorphism and then taking the pre-image of $B^s_{p;q}$ and $\hat{B}^s_{p;q}$, one may define $B^s_{ap;aq}$ and $\hat{B}^s_{ap;aq}$.

That $B^s_{p;q}$ and $\hat{B}^s_{p;q}$ play the role of circular cliques in the subclass of signed bipartite graphs is the subject of the next theorem. For simplicity, it is stated using $B_{p;q}$ and switching homomorphism but one can easily restated by using $B^s_{p;q}$ or $\hat{B}^s_{p;q}$ and the associated notion of homomorphism.

\begin{theorem}\label{thm:BK}
	Given a signed bipartite graph $(G, \sigma)$ and a rational number $\dfrac{p}{q}$ in $[2,4]$ where $p$ is a positive even integer and subject to this, $\frac{p}{q}$ is in its simplest form, we have 
	$$\chi_c(G, \sigma)\leq \dfrac{p}{q} \text{ if and only if } (G, \sigma)\to B_{p;q}.$$
\end{theorem}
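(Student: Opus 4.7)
I will prove the two directions separately. The forward direction is short: if $(G,\sigma)\to B_{p;q}$ via a switching homomorphism, then composing with the inclusion of $B_{p;q}$ as a signed subgraph of $K^s_{p;q}$ yields a switching homomorphism $(G,\sigma)\to K^s_{p;q}$. Since every vertex $i$ of $K^s_{p;q}$ has its switched copy at $i+\frac{p}{2}$, switching and edge-sign preserving homomorphisms to $K^s_{p;q}$ coincide, and by the equivalences recalled in Section~\ref{sec:X_c} we conclude $\chi_c(G,\sigma)\leq \frac{p}{q}$.

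For the backward direction, assume $\chi_c(G,\sigma)\leq \frac{p}{q}$; after a suitable switching of $\sigma$ to an equivalent signature $\sigma'$, there is an edge-sign preserving homomorphism $\varphi:(G,\sigma')\spto K^s_{p;q}$. Let $(A,B)$ be the bipartition of $G$. The aim is to modify $\varphi$ so that $\varphi(A)\subseteq X=\{0,2,\dots,p-2\}$ and $\varphi(B)\subseteq Y=\{1,3,\dots,p-1\}$: once this is achieved, every edge of $G$ is mapped to an edge of $K^s_{p;q}$ between $X$ and $Y$, which by definition is an edge of $B_{p;q}$. The central mechanism is that replacing $\varphi(v)$ by its antipode $\varphi(v)+\frac{p}{2}\pmod{p}$ produces an edge-sign preserving homomorphism to $K^s_{p;q}$ from the signature obtained by switching $\sigma'$ at $v$; indeed the identity $d_{C^p}(i+\frac{p}{2},j)=\frac{p}{2}-d_{C^p}(i,j)$ shows that this operation exchanges the positive-edge range of cyclic distances $[q,p/2]$ with the negative-edge range $[0,p/2-q]$.

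When $p\equiv 2\pmod{4}$, the integer $\frac{p}{2}$ is odd, hence $\varphi(v)$ and $\varphi(v)+\frac{p}{2}$ have opposite parities. For each $v$, choose the member of $\{\varphi(v),\varphi(v)+\frac{p}{2}\}$ whose parity matches its part ($X$ for $v\in A$, $Y$ for $v\in B$); call the resulting map $\tilde\varphi$ and the associated switching of $\sigma'$ at the set of modified vertices $\sigma''$. By the previous paragraph, $\tilde\varphi:(G,\sigma'')\spto K^s_{p;q}$ is edge-sign preserving, and since $G$ is bipartite with the bipartitions now aligned, the image of every edge lands in $B_{p;q}$. As $\sigma''\sim\sigma$, we obtain $(G,\sigma)\to B_{p;q}$ by switching.

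The case $p\equiv 0\pmod{4}$ is the hardest part of the plan: here $\frac{p}{2}$ is even, so the antipodal swap preserves parity and the per-vertex trick above is powerless. In this regime I plan to work directly with a circular $\frac{p}{q}$-coloring $f:V(G)\to C^{p/q}$, rescale to $\tilde f=qf:V(G)\to[0,p)$, and perform a bipartition-aware discretization that sends each $v\in A$ to a nearby even integer and each $v\in B$ to a nearby odd integer. The crux is that naive rounding can distort cyclic distances by up to $2$, which could violate tight positive or negative constraints; I would introduce a continuous rotation parameter $t\in[0,2)$ and argue by a pigeonhole/averaging argument (exploiting the factorization $p=4m$ to pair even and odd residues symmetrically) that some shift $t$ produces a valid edge-sign preserving homomorphism to $B_{p;q}$, possibly after a further switching. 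Simultaneously preserving both positive and negative edge constraints under such a bipartite rounding is the main obstacle of the proof.
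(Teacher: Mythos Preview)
Your forward direction and your treatment of the case $p\equiv 2\pmod 4$ are correct and match the paper's argument. The gap is in the case $p\equiv 0\pmod 4$: your proposal there is not a proof but a plan with an explicitly acknowledged obstacle, and the rotation/averaging scheme you sketch is both vague and unnecessary.

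The point you are missing is a parity observation that makes this case completely elementary. When $p=4k$ and $\frac{p}{q}$ is in simplest form subject to $p$ being even, $q$ must be odd (otherwise $p/2$ is still even and one could reduce). Now take an edge-sign preserving homomorphism $\varphi:(G,\sigma')\spto K^s_{4k;q}$ and simply define $\phi(u)=\varphi(u)+1$ whenever $\varphi(u)$ lands in the wrong parity class for the bipartition, and $\phi(u)=\varphi(u)$ otherwise. The only nontrivial check is an edge $uv$ with exactly one endpoint shifted; without loss of generality $\varphi(u),\varphi(v)$ are both even, so $|\varphi(u)-\varphi(v)|$ is even. But the thresholds for a positive edge are $q$ and $4k-q$, and for a negative edge $2k-q$ and $2k+q$, all of which are odd. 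Hence every relevant inequality is strict, and changing one side by $1$ cannot violate it. This gives an edge-sign preserving homomorphism to $B_{4k;q}$ directly, with no switching and no continuous-parameter argument. Your worry that rounding can distort distances by up to $2$ is exactly what this parity slack absorbs.
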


\begin{proof}
Let $(G, \sigma)$ be a signed bipartite graph. One direction is quite trivial. As $B_{p;q}$ is a subgraph of $K^s_{p;q}$, $\chi_c(B_{p;q})\leq \frac{p}{q}$. If $(G, \sigma)\to B_{p;q}$, then, by Lemma~\ref{lem:No-HomByCircularChromatic}, we have $\chi_c(G, \sigma)\leq \frac{p}{q}$. 

It remains to show that if $\chi_c(G, \sigma)\leq \frac{p}{q}$, then $(G, \sigma)\to B_{p;q}.$ Since $B_{p;q}$ behaves differently depending on whether $p$ divides $4$ or not, we divide the proof into two cases based on this criteria: (1) $p=4k$. (2) $p=4k+2$. We note that in the first case, $q$ must be an odd number.

\medskip
\noindent
{\bf Case 1} {\em $p=4k$.}
\medskip

As $\frac{p}{q}\geq 2$, we know $q$ is an odd number smaller than $2k-1$. Let $(X, Y)$ be the bipartition of $B_{4k;q}$ and let $(A, B)$ be the bipartition of $(G, \sigma)$. Since $\chi_c(G, \sigma)\leq \frac{4k}{q}$, there is an edge-sign preserving homomorphism of $(G, \sigma)$ to $K^s_{4k;q}$. Let $\varphi$ be such a homomorphism. Our goal is to modify $\varphi$, if needed, so that we obtain a mapping of $(G, \sigma)$ to $B_{4k;q}$. This would of course be based on the bipartition of $G$. One such modification is given as follows:

$$\phi(u)=
\begin{cases}
     \varphi(u)+1 & \text{ either $u\in A$ and $\varphi(u)\in Y$ or $u\in B$ and $ \varphi(u) \in X$, } \\ 
	 \varphi(u) & \text{ otherwise. } \\
\end{cases}
$$

Intuitively, we aim at modifying the mapping such that the vertices in the part $A$ of $G$ are mapped to the vertices in the part $X$ of $B_{4k;q}$ and the vertices in the part $B$ are mapped to the vertices in the part $Y$. In defining $\phi$, for vertices of $G$ satisfying these conditions under the mapping $\varphi$, we give a same image under $\phi$. If this condition is not met, then we shift the image by $1$ in the clockwise direction of the circle. What remains is to show that $\phi$ is also an edge-sign preserving homomorphism of $(G, \sigma)$ to $K^s_{4k;q}$. Then it would naturally be a homomorphism of $(G,\sigma)$ to $B_{4k;q}$ as well. 

Given an edge $e=uv$ of $G$, if both $\phi(u)=\varphi(u)$ and $\phi(v)=\varphi(v)$ hold, then $e$ is already mapped to an edge of a same sign under $\varphi$ and nothing left to show. If $\phi(u)=\varphi(u)+1$ and $\phi(v)=\varphi(v)+1$, then the claim follows from the circular structure of $K^s_{p;q}$, that is, if there is an edge $ij$ of sign $\eta$ in $B_{4k;q}$, then there is also an $(i+1)(j+1)$ (additions done modulo $4k$) edge of sign $\eta$. It remains to consider the case that only one endpoint of $e=uv$ has been shifted. By the symmetry, we may assume $\phi(u)=\varphi(u)$ and $\phi(v)=\varphi(v)+1$. Moreover, noting that $u$ and $v$ must be in different parts of the bipartite graph $G$, and again by the symmetries, we assume $u\in A$ and $v\in B$ with $\varphi(u), \varphi(v) \in X$. Hence, by our assumption, $\phi(u)=\varphi(u)$ and $\phi(v)=\varphi(v)+1 \in Y$. Depending on the signature of $e$, we consider two cases. 

If $e$ is a positive edge, then $\varphi(u)\varphi(v)$ is a positive edge of $K^s_{4k;q}$. Thus $q \leq |\varphi(u)-\varphi(v)| \leq p-q=4k-q$. Observe that, as $\varphi(u)$ and $\varphi(v)$ are both in $X$, they have a same parity, and thus $|\varphi(u)-\varphi(v)|$ is an even number. However, since $q$ is an odd number, both sides of the inequality (i.e., $q$ and $4k-q$) are odd numbers and, therefore, equality cannot hold there. It is implied that if we change (only) one of $\varphi(u)$ and $\varphi(v)$ by a value of at most $1$, then the inequality would still hold. Thus $\phi(u)\phi(v)$ is a positive edge of $K^s_{4k;q}$.

If $e$ is a negative edges, then (only) one of the following must hold: either $|\varphi(u)-\varphi(v)| \leq \frac{p}{2}-q=2k-q$ or $|\varphi(u)-\varphi(v)| \geq \frac{p}{2}+q=2k+q$. As in the previous case, we conclude that $|\varphi(u)-\varphi(v)|$ is an even number. However, $\frac{p}{2}=2k$ is an even number while $q$ must be an odd number. Thus both of $2k-q$ and $2k+q$ are odd numbers and once again the equality cannot hold. Therefore, after shifting only one of the values of $\varphi(u),\varphi(v)$ by $1$, the corresponding inequality holds with respect to the new function which is $\phi$, that is to say, either $|\phi(u)-\phi(v)| \leq 2k-q$ or $|\phi(u)-\phi(v)| \geq 2k+q$. Hence $e$ is mapped to a negative edge $\phi(u)\phi(v)$ of $K^s_{4k;q}$.

\medskip
\noindent
{\bf Case 2} {\em $p=4k+2$.}
\medskip

Notice that in this case, $\frac{p}{2}=2k+1$ is an odd number. Let $(X, Y)$ be the bipartition of $B_{4k+2;q}$ and let $(A, B)$ be a bipartition of $(G, \sigma)$. 

Since $\chi_c(G, \sigma)\leq \frac{4k+2}{q}$, there exists an edge-sign preserving homomorphism of $(G, \sigma)$ to $K^s_{4k+2;q}$, say $\varphi$. Our goal is to modify $\varphi$ to obtain a (switching) homomorphism of $(G, \sigma)$ to $B_{4k+2;q}$. This would be based on the bipartition of $G$. Intuitively, we want a mapping that maps vertices in $A$ to $X$ and those in $B$ to $Y$. We observe that for each pair of antipodal vertices of $K^s_{4k+2;q}$, one is in $X$ and the other is in $Y$. Thus in the mapping $\varphi$, if one vertex is not mapped to the correct part, then we first apply a switching at that vertex and then map it to the antipodal of the original image. This is formalized as follows.

$$\phi(u)=
\begin{cases}
	\overbar{\varphi(u)} \text{ (switching at $u$)} & \text{ either $u\in A$ and $\varphi(u)\in Y$ or $u\in B$ and $ \varphi(u) \in X$, } \\ 
	\varphi(u) & \text{ otherwise. } \\
\end{cases}
$$

What remains is to show that $\phi$ is a (switching) homomorphism of $(G, \sigma)$ to $K^s_{4k+2;q}$. Then it would naturally be a homomorphism of $(G,\sigma)$ to $B_{4k+2;q}$ as well. 

Given an edge $e=uv$ of $G$, if both $\phi(u)=\varphi(u)$ and $\phi(v)=\varphi(v)$ hold, then it follows easily that $\phi(u)\phi(v)$ is the required edge. If $\phi(u)=\overbar{\varphi(u)}$ and $\phi(v)=\overbar{\varphi(u)}$, then we switch at both of vertices $u$ and $v$. Thus the sign of $uv$ does not change. Moreover, vertices $i$ and $j$ are connected by an edge of sign $\eta$ in $K^s_{4k+2;q}$, then their antipodals are also connected by an edge of the same sign. Therefore, $\phi(u)\phi(v)$ is an edge of $K^s_{4k+2;q}$ with the same sign as $\varphi(u)\varphi(v)$ and thus as $uv$. The final case is that only one endpoint of $e=uv$ has been switched and mapped to the antipodal. By the symmetries, we may assume that we switch at $v$ and $\phi(v)=\overbar{\varphi(v)}$. Moreover, noting that $u$ and $v$ must be in different parts of the bipartite graph $G$, and again by the symmetries, we assume $u\in A$ and $v\in B$ with $\varphi(u), \varphi(v) \in X$. Hence, by our assumption, $\phi(u)=\varphi(u)$ and $\phi(v)=\overbar{\varphi(v)} \in Y$. Depending on the sign of $e$, we consider two cases. 

If $e$ is a positive edge, then $\varphi(u)\varphi(v)$ is a positive edge of $K^s_{4k+2;q}$. Thus $q \leq |\varphi(u)-\varphi(v)| \leq p-q=4k+2-q$. As $|\varphi(v)-\overbar{\varphi(v)}|=\frac{p}{2}=2k+1$, we have $|\varphi(u)-\overbar{\varphi(v)}|\leq 2k+1-q$ or $|\varphi(u)-\overbar{\varphi(v)}|\geq 2k+1+q$. Note that now $uv$ is a negative edge of $(G, \sigma')$ where $\sigma'$ is obtained from $\sigma$ by switching at $v$. Since $\varphi(u)\overbar{\varphi(v)}$ satisfies the condition for being a negative edge of $K^s_{4k+2;q}$, $\phi(u)\phi(v)$ is a negative edge that we required.

If $e$ is a negative edges, then (only) one of the following must hold: either $|\varphi(u)-\varphi(v)| \leq \frac{p}{2}-q=2k+1-q$ or $|\varphi(u)-\varphi(v)| \geq \frac{p}{2}+q=2k+1+q$. As in the previous case, we have that $|\varphi(v)-\overbar{\varphi(v)}|=\frac{p}{2}=2k+1$. Thus $q \leq |\varphi(u)-\overbar{\varphi(v)}| \leq 4k+2-q$. Switching at $v$ makes $uv$ become a positive edge. Now $\varphi(u)\overbar{\varphi(v)}$ satisfies the condition for being a positive edge of $K^s_{4k+2;q}$, in other words, $\phi(u)\phi(v)$ is a positive edge. Therefore, we verify that $\phi$ is a (switching) homomorphism of $(G, \sigma)$ to $K^s_{4k+2;q}$ and thus also to its signed bipartite subgraph $B_{4k+2;q}$.
\end{proof}

We note that the assumption $\frac{p}{q}\leq 4$ is not used explicitly in the proof. If $\frac{p}{q}> 4$, then the signed bipartite graph induced by odd versus even vertices will contain a digon which admits a homomorphism from any signed bipartite graph and provides the upper bound of $4$ for the circular chromatic number of this class of signed graphs. That leaves us with circular $3$-coloring as a special case. In this case, by switching at all vertices of one part of $B_{6;2}$, we get a signed graph on $K_{3,3}$ where all negative edges form a perfect matching. Thus we denote it by $(K_{3,3},M)$. Hence, as a special case we have:

\begin{corollary}
	Given a signed bipartite graph $(G, \sigma)$, $$\chi_c(G, \sigma)\leq 3 \text{ if and only if } (G, \sigma)\to (K_{3,3}, M).$$
\end{corollary}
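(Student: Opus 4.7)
The plan is to derive the corollary as a direct specialization of Theorem~\ref{thm:BK}. I would choose $p=6$ and $q=2$: then $\frac{p}{q}=3 \in [2,4]$, $p$ is even, and subject to $p$ being even, the representation $\frac{6}{2}$ is the simplest form of $3$ (any representation $\frac{2r}{s}=3$ with $\gcd(2r,s)=1$ forces $s=1$, which is impossible since $2r$ is even). Theorem~\ref{thm:BK} then yields $\chi_c(G, \sigma)\leq 3$ if and only if $(G, \sigma)\to B_{6;2}$. Hence the corollary reduces to identifying $B_{6;2}$ with $(K_{3,3}, M)$ up to switching equivalence, since switching homomorphism targets are invariant under switching of the target.

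To verify that switching equivalence, I would unpack $B_{6;2}$ on the bipartition $X=\{0,2,4\}$, $Y=\{1,3,5\}$ using the definition of $K^s_{6;2}$. In $K^s_{6;2}$ an edge $ij$ is positive iff $2\leq |i-j|\leq 4$ and negative iff $|i-j|\leq 1$ or $|i-j|\geq 5$. Restricting to edges with one endpoint in $X$ and one in $Y$, the positive edges are precisely $\{03, 25, 41\}$ (differences $3$) and the remaining six edges of $K_{3,3}$ (differences $1$ or $5$) are negative. Since each edge of $B_{6;2}$ has exactly one endpoint in $X$, switching at every vertex of $X$ flips the sign of every edge. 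After this switching, the only negative edges are $\{03, 25, 41\}$, which form a perfect matching of $K_{3,3}$; this is precisely $(K_{3,3},M)$. Combined with the preceding reduction, this gives the claimed equivalence.

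There is no real obstacle here beyond two bookkeeping points: making sure to use the even-denominator form $\frac{6}{2}$ required by Theorem~\ref{thm:BK} (rather than the naive $\frac{3}{1}$), and explicitly invoking the fact that, under the switching homomorphism relation $\to$, replacing a target by a switching-equivalent signed graph does not change the class of signed graphs admitting a homomorphism to it. With those in hand, the corollary is immediate.
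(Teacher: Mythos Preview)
Your proposal is correct and matches the paper's own derivation: the paper states the corollary immediately after Theorem~\ref{thm:BK} as the special case $p=6$, $q=2$, noting that switching at all vertices of one part of $B_{6;2}$ yields a signed $K_{3,3}$ whose negative edges form a perfect matching. Your explicit edge-by-edge verification of the signs and the switching is a bit more detailed than the paper's one-line remark, but the argument is the same.
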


Another special case is when $p=4k$ and $q=2k-1$. In this case, one may observe that the (switching) core of $B_{4k;2k-1}$ (on $2k$ vertices) is switching equivalent to the negative cycle $C_{-2k}$. Hence, we have the following corollary.

\begin{corollary}
	Given a signed bipartite graph $(G, \sigma)$,  
	$$\chi_c(G, \sigma)\leq \dfrac{4k}{2k-1} \text{ if and only if } (G, \sigma)\to C_{\!\scriptscriptstyle -2k}.$$
\end{corollary}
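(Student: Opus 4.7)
The plan is to deduce this corollary as a direct consequence of Theorem~\ref{thm:BK} by establishing that $B_{4k;2k-1}$ and $C_{-2k}$ are homomorphically equivalent signed graphs with respect to the switching homomorphism. The arithmetic hypotheses of Theorem~\ref{thm:BK} are quickly verified: since $4k = 2(2k-1) + 2$ and $2k-1$ is odd, we get $\gcd(4k, 2k-1) = 1$, so $\frac{4k}{2k-1}$ is already in simplest form subject to $p = 4k$ being even, and $2 \leq \frac{4k}{2k-1} \leq 4$ for every $k \geq 1$. Theorem~\ref{thm:BK} then reduces the corollary to proving the claimed homomorphic equivalence.

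The direction $C_{-2k} \to B_{4k;2k-1}$ would be handled by exhibiting an explicit embedded copy: inside $B_{4k;2k-1}$, the walk $0 \to 1 \to \cdots \to (2k-1) \to 0$ uses $2k-1$ negative edges (all of absolute difference $1$) and closes with a positive edge $(2k-1, 0)$ of absolute difference $2k-1$, yielding a cycle of length $2k$ with total sign $(-1)^{2k-1}\cdot(+1) = -1$, i.e.\ a copy of $C_{-2k}$. For the reverse direction $B_{4k;2k-1} \to C_{-2k}$, I would use the natural folding map $f(i) = i \bmod 2k$, with $C_{-2k}$ identified with the embedded copy just described. Adjacency preservation is immediate: each edge of $B_{4k;2k-1}$ satisfies $|u - v| \in \{1,\, 2k-1,\, 2k+1\}$, each of which reduces to $\pm 1$ modulo $2k$, so $\{f(u), f(v)\}$ is always an edge of the target. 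To promote $f$ to a switching homomorphism, I would switch the signature of $B_{4k;2k-1}$ at the vertex set $S = \{2k, 2k+1, \ldots, 4k-1\}$ and verify that after this switching the map is edge-sign preserving.

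The main technical step, and where the real content lies, is the sign verification after switching at $S$. The task is to classify each edge of $B_{4k;2k-1}$ according to whether its endpoints both lie inside one of the halves $\{0,\ldots,2k-1\}$ or $S$ (in which case the switching leaves its sign unchanged) or they straddle the boundary (in which case the sign is flipped), and then check, within each of the three difference-classes $|u-v|\in\{1,\, 2k-1,\, 2k+1\}$, that the resulting sign agrees with the desired sign in $C_{-2k}$ (which is negative on every edge except the closing edge $(2k-1, 0)$, on which it is positive). The clean arithmetic fact that makes everything work is that an edge of $B_{4k;2k-1}$ has image $(2k-1, 0)$ under $f$ exactly when its endpoints straddle the boundary between the two halves in precisely the configuration needed to flip its sign to positive, while all non-closing image edges arise from configurations that preserve (or flip to) negative signs; combining this with Theorem~\ref{thm:BK} then yields the corollary.
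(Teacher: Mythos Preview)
Your approach is correct and coincides with the paper's: the paper simply observes that, since $4\mid 4k$, the switching core $\hat{B}^s_{4k;2k-1}$ is the subgraph of $B_{4k;2k-1}$ induced on $\{0,1,\ldots,2k-1\}$, and that this induced subgraph is (switching equivalent to) $C_{-2k}$; Theorem~\ref{thm:BK} then gives the corollary. Your explicit embedding on $\{0,\ldots,2k-1\}$ and your folding map $f(i)=i\bmod 2k$ after switching at $S=\{2k,\ldots,4k-1\}$ are exactly the verification that the paper leaves implicit in the phrase ``one may observe.''

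There is one small omission to patch. Your edge inventory ``each edge of $B_{4k;2k-1}$ satisfies $|u-v|\in\{1,\,2k-1,\,2k+1\}$'' misses the negative edge $\{0,\,4k-1\}$, which has $|u-v|=4k-1$ (the negative-edge condition in $K^s_{4k;2k-1}$ is $|i-j|\le 1$ \emph{or} $|i-j|\ge 4k-1$). This fourth class still reduces to $\pm 1$ modulo $2k$, so adjacency preservation is unaffected, and the sign check goes through as well: the unique such edge $\{0,\,4k-1\}$ straddles the two halves, so switching at $S$ flips it to positive, and its image $\{0,\,2k-1\}$ is precisely the positive closing edge of your target $C_{-2k}$. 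Once you add this case, your three-class case analysis becomes four classes and the argument is complete.
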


This helps to fill the parity gap in some study of circular coloring of graphs where homomorphism to odd cycle $C_{2k+1}$ is known to be equivalent to a circular $\frac{4k+2}{2k}$-coloring. A uniform presentation of the two is as follows.

\begin{theorem}
Given a positive integer $\ell$, $\ell\geq 2$, and a signed graph $(G, \sigma)$ satisfying $g_{ij}(G, \sigma) \geq g_{ij}(C_{-\ell})$ for each $ij\in \mathbb{Z}_2^2$, we have
 	$$\chi_c(G, -\sigma)\leq \dfrac{2\ell}{\ell-1} \text{ if and only if } (G, \sigma)\to C_{\!\scriptscriptstyle -\ell}.$$
\end{theorem}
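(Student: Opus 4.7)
My plan is to reduce the statement, via the global sign-flipping functor $\sigma\mapsto -\sigma$, to two already available results: the corollary immediately preceding the statement (for even $\ell$) and the classical identity $\chi_c(G)\leq \tfrac{2k+1}{k}\Longleftrightarrow G\to C_{2k+1}$ (for odd $\ell$). The starting observation is that global sign negation is a functor on signed graphs, so $(G,\sigma)\to C_{-\ell}$ is equivalent to $(G,-\sigma)\to -C_{-\ell}$, where $-C_{-\ell}$ denotes the cycle with every edge sign flipped. I would then identify $-C_{-\ell}$ up to switching equivalence: if $\ell$ is even, flipping $\ell$ signs preserves the parity of the number of negative edges on the cycle and $-C_{-\ell}$ is switching equivalent to $C_{-\ell}$; if $\ell$ is odd, this parity flips and $-C_{-\ell}$ is switching equivalent to the all-positive cycle $C_\ell$. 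In both parities $\chi_c(-C_{-\ell})=\tfrac{2\ell}{\ell-1}$, so the forward direction follows immediately from Lemma~\ref{lem:No-HomByCircularChromatic}.

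For the converse, I would use the easy identity $g_{ij}(G,-\sigma)=g_{i\oplus j,\,j}(G,\sigma)$ (a closed walk of length $L$ with $N$ negative edges under $\sigma$ has $L-N$ negative edges under $-\sigma$) to rewrite the hypothesis as $g_{ij}(G,-\sigma)\geq g_{ij}(-C_{-\ell})$ for all $ij\in\mathbb{Z}_2^2$, and then split on the parity of $\ell$. When $\ell=2k$, this gives $g_{01}(G,-\sigma)=g_{11}(G,-\sigma)=\infty$ and $g_{10}(G,-\sigma)\geq 2k$, so $G$ is bipartite and $(G,-\sigma)$ has negative girth at least $2k$; since $G$ is bipartite, switching one whole part of the bipartition transforms $\sigma$ into $-\sigma$, whence $\chi_c(G,\sigma)=\chi_c(G,-\sigma)\leq \tfrac{4k}{2k-1}$, and the preceding corollary yields $(G,\sigma)\to C_{-2k}$.

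When $\ell=2k+1$, the translated hypothesis instead reads $g_{10}(G,-\sigma)=g_{11}(G,-\sigma)=\infty$ and $g_{01}(G,-\sigma)\geq \ell$, so $(G,-\sigma)$ admits no negative closed walk at all, i.e., $-\sigma$ is switching equivalent to the all-positive signature, $\chi_c(G,-\sigma)=\chi_c(G)$, and the odd girth of $G$ is at least $\ell$. The classical circular-clique theorem then gives $\chi_c(G)\leq \tfrac{2k+1}{k}$ iff $G\to C_{2k+1}$, and such a graph homomorphism lifts at once to a switching homomorphism $(G,-\sigma)\to(C_\ell,+)$, which is switching equivalent to $-C_{-\ell}$. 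The only real obstacle is the parity bookkeeping -- correctly identifying the switching class of $-C_{-\ell}$ in each parity and correctly tracking how the four $g_{ij}$ invariants transform under $\sigma\mapsto -\sigma$; beyond that, no new ideas are needed.
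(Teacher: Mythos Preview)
Your proposal is correct and matches the paper's intended argument: the paper does not spell out a proof but presents the theorem explicitly as ``a uniform presentation'' of the preceding corollary (the even-$\ell$ case $(G,\sigma)\to C_{-2k}\Leftrightarrow \chi_c(G,\sigma)\le \tfrac{4k}{2k-1}$) together with the classical odd-cycle identity $G\to C_{2k+1}\Leftrightarrow \chi_c(G)\le \tfrac{2k+1}{k}$, and your parity split via the sign-negation functor is exactly that unification carried out in detail. The bookkeeping you flag---identifying $-C_{-\ell}$ up to switching and tracking the permutation $g_{ij}\mapsto g_{i\oplus j,\,j}$ under $\sigma\mapsto -\sigma$---is handled correctly.
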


\subsection{Circular coloring and subdivision}

A classic relation between the chromatic number of a graph and homomorphism from a certain subdivisions of it to the odd cycle is extended, in \cite{NPW20+}, to a relation between the circular chromatic number of signed graphs and homomorphism of its subdivision to negative cycles. Here we present a slightly stronger version and then use it to build examples in the next sections. 

\begin{definition}
Given a signed graph $(G, \sigma)$ and a positive integer $\ell$, we define $T^*_{\ell}(G, \sigma)$ to be the signed graph obtained from $(G, \sigma)$ by replacing each edge $e$ with a path $P_{\ell}$ of length $\ell$ where internal vertices of the path are disjoint and assigning a signature satisfying that $P_{\ell}$ contains an odd number of positive edges if $e$ is a positive edge and $P_{\ell}$ contains an even number of positive edges if $e$ is a negative edge.
\end{definition}

We note that there are many choices for the signature in defining $T^*_{\ell}(G, \sigma)$, but, as all such choices are switching equivalent, one may take any. The relation between the circular chromatic number of $(G, \sigma)$ and $T^*_{\ell}(G, \sigma)$ follows from two lemmas based on the following notation of indicator. 

Given a signed graph $I$ with two specific vertices $u$ and $v$, we refer to $\mathcal{I}=(I,u,v)$ as an indicator.
Given an indicator $\mathcal{I}$ and a real number $r$, $r\geq 2$, with $[0,r)$ viewed as the circle of circumference $r$, we define $Z(\mathcal{I})$ to be the set of possible choices for $u$ in a circular $r$-coloring of $\mathcal{I}$ where $v$ is colored by $0$.
  
Given two indicators $\mathcal{I}_+=(I_1,u_1,v_2)$ and $\mathcal{I}_-=(I_2,u_2,v_2)$, for each signed graph $\Omega$, we define $\Omega(\mathcal{I}_+, \mathcal{I}_-)$ to be the signed graph obtained from $\Omega$ by replacing each positive edge $xy$ with a distinct copy of $\mathcal{I}_+$ where $x$ is identified with $u_1$ and $y$ with $u_2$ and similarly replacing each negative edge with $\mathcal{I}_-$. For some indicators, the circular chromatic number of $\Omega(\mathcal{I}_+, \mathcal{I}_-)$ could be determined by $\chi_c(\Omega)$.

\begin{lemma}{\rm \cite{NWZ21}}\label{lem:indicator}
    	Assume $\mathcal{I}_+$ and $\mathcal{I}_-$ are two signed indicators, $r\geq 2$ is a real number such that $Z(\mathcal{I}_+)=[t, \frac{r}{2}] \text{ and } Z(\mathcal{I}_-)=[0, \frac{r}{2}-t]$ for some $0<t<\frac{r}{2}$. Then for any signed graph $\Omega$, we have $$\chi_c(\Omega(\mathcal{I}_+, \mathcal{I}_-))=t\chi_c(\Omega).$$
\end{lemma}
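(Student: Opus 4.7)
The plan is to establish the equality by proving both inequalities through a matched pair of scaling constructions that use the intervals $Z(\mathcal{I}_+)$ and $Z(\mathcal{I}_-)$ to transport circular colorings between $\Omega$ and $\Omega(\mathcal{I}_+,\mathcal{I}_-)$. The guiding idea is that under the hypothesis each copy of $\mathcal{I}_+$ (respectively $\mathcal{I}_-$) behaves like a ``weighted'' positive (respectively negative) edge of weight $t$: it forces its two terminals to lie at circular distance at least $t$ on $C^r$ (respectively at antipodal distance at least $t$), and conversely any placement of the two terminals satisfying this constraint extends to a full circular $r$-coloring of the indicator. In other words, the hypothesis on $Z(\mathcal{I}_\pm)$ is an exact characterization of which pairs of terminal colors are realizable, which is what makes the scaling argument tight.

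For the upper bound $\chi_c(\Omega(\mathcal{I}_+,\mathcal{I}_-)) \leq t\,\chi_c(\Omega)$, I would fix a circular $s$-coloring $c$ of $\Omega$ with $s = \chi_c(\Omega)$ (approaching $s$ through rationals if the infimum is not attained) and define a partial coloring $\tilde c$ of $\Omega(\mathcal{I}_+,\mathcal{I}_-)$ on the ``old'' vertices $V(\Omega)$ by $\tilde c(v) = t\,c(v)$, viewed on $C^{ts}$. Multiplication by $t$ multiplies every circular distance (and every antipodal distance) by $t$, so the positive edges of $\Omega$ get endpoints at $C^{ts}$-distance at least $t$ and the negative edges get antipodal distance at least $t$. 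Taking $r = ts$, the hypothesis $Z(\mathcal{I}_+) = [t, r/2]$ says exactly that such a pair extends across a copy of $\mathcal{I}_+$, and $Z(\mathcal{I}_-) = [0, r/2 - t]$ does the same for negative edges, so $\tilde c$ extends to a circular $r$-coloring of $\Omega(\mathcal{I}_+,\mathcal{I}_-)$.

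For the lower bound $\chi_c(\Omega(\mathcal{I}_+,\mathcal{I}_-)) \geq t\,\chi_c(\Omega)$, set $r^\star = \chi_c(\Omega(\mathcal{I}_+,\mathcal{I}_-))$ and fix a circular $r^\star$-coloring $\psi$. For each positive edge $xy$ of $\Omega$, the restriction of $\psi$ to the copy of $\mathcal{I}_+$ that replaced $xy$ is a valid circular $r^\star$-coloring of $\mathcal{I}_+$; after rotating so that $\psi(y) = 0$ the vertex $\psi(x)$ lands in $Z(\mathcal{I}_+)$, which by hypothesis forces $d_{C^{r^\star}}(\psi(x),\psi(y)) \geq t$. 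An analogous argument with $\mathcal{I}_-$ shows that negative edges of $\Omega$ get antipodal distance at least $t$. Rescaling $\psi|_{V(\Omega)}$ by $1/t$ onto $C^{r^\star/t}$ then converts these into the standard positive/negative edge constraints, yielding a circular $(r^\star/t)$-coloring of $\Omega$ and hence $\chi_c(\Omega) \leq r^\star/t$.

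The main delicacy I expect is bookkeeping around the value of $r$: the hypothesis specifies $Z(\mathcal{I}_\pm)$ at a single $r$, while the two directions of the argument naturally run at $r = ts$ and $r = r^\star$ respectively. The cleanest way to handle this is to first prove the sharpened equivalence ``$\Omega(\mathcal{I}_+,\mathcal{I}_-)$ admits a circular $r$-coloring if and only if $\Omega$ admits a circular $(r/t)$-coloring'' for the specified $r$, and then deduce the numerical equality $\chi_c(\Omega(\mathcal{I}_+,\mathcal{I}_-)) = t\,\chi_c(\Omega)$ by combining this equivalence with the infimum definition of the circular chromatic number and the fact that the argument applies to all rational targets $p/q$ at which the indicator's $Z$ profile has the stated shape.
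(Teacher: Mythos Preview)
The paper does not prove this lemma; it is quoted from \cite{NWZ21} and used as a black box to derive Lemma~\ref{lem:X_c(T_l)}. So there is no in-paper proof to compare against.

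Your overall strategy---scale a circular coloring of $\Omega$ by $t$ to get a partial coloring of $\Omega(\mathcal{I}_+,\mathcal{I}_-)$ and extend over each indicator, and conversely restrict a coloring of $\Omega(\mathcal{I}_+,\mathcal{I}_-)$ to $V(\Omega)$ and rescale by $1/t$---is exactly the standard argument and is the approach taken in the source \cite{NWZ21}.

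The delicacy you flag is genuine and is really an imprecision in the lemma's statement rather than a defect in your argument. As written, the hypothesis fixes $Z(\mathcal{I}_\pm)$ at one value of $r$, yet the conclusion is about $\chi_c$, which in your two directions would need the hypothesis at $r=t\chi_c(\Omega)$ and at $r=\chi_c(\Omega(\mathcal{I}_+,\mathcal{I}_-))$ respectively. The intended reading (visible from how the paper applies the lemma with $t=\ell-(\ell-1)\tfrac{r}{2}$ depending on $r$) is that the hypothesis holds for all $r$ in the relevant range with $t=t(r)$, and one proves the equivalence ``$\Omega(\mathcal{I}_+,\mathcal{I}_-)$ is circular $r$-colorable iff $\Omega$ is circular $\tfrac{r}{t(r)}$-colorable'' for each such $r$; the equality of chromatic numbers then follows by solving $r=t(r)\chi_c(\Omega)$. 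Your closing paragraph already gestures at this resolution, so the plan is sound once you make explicit that the hypothesis is meant to hold across a range of $r$ (as in Lemma~\ref{lem:Path-Indicator}).
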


We denote a path of length $\ell$ which contains an odd number of positive edges by $P^{o}_{\ell}$ and a path of length $\ell$ which contains an even number of positive edges by $P^{e}_{\ell}$.
The special choice for the indicators are $P^{o}_{\ell}$ and $P^{e}_{\ell}$, with the two endpoints as special vertices. The range of possible choices for the ends in the circular colorings of these paths is computed in \cite{PZ21+}.

\begin{lemma}{\rm \cite{PZ21+}}\label{lem:Path-Indicator}
		Given an integer $\ell\geq 1$ and a real number $r< \frac{2\ell}{\ell-1}$, $$Z(P^{e}_{\ell})=[0, \ell\frac{r}{2}-\ell] \text{ and } Z(P^{o}_{\ell})=[\ell-(\ell-1)\frac{r}{2}, \frac{r}{2}].$$
\end{lemma}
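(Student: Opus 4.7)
The plan is to prove both identities simultaneously by induction on $\ell$, since the inductive step naturally couples them: the first $\ell-1$ edges of a $P^o_\ell$ form either a $P^e_{\ell-1}$ or a $P^o_{\ell-1}$ depending on the sign of the last edge, and similarly for $P^e_\ell$.

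For the base case $\ell=1$, the signed graph $P^o_1$ is a single positive edge, so the circular-coloring constraint $d_{C^r}(\varphi(u),0)\geq 1$ immediately gives $Z(P^o_1)=[1,\frac{r}{2}]$, matching $[\ell-(\ell-1)\frac{r}{2},\frac{r}{2}]$. Similarly, $P^e_1$ is a single negative edge, and the constraint $d_{C^r}(\varphi(u),\overline{0})\geq 1$ rewrites as $d_{C^r}(\varphi(u),0)\leq \frac{r}{2}-1$, giving $Z(P^e_1)=[0,\frac{r}{2}-1]=[0,\ell\frac{r}{2}-\ell]$.

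For the inductive step I would assume both identities for $\ell-1$ (noting that $r<\frac{2\ell}{\ell-1}$ implies $r<\frac{2(\ell-1)}{\ell-2}$, so the inductive hypothesis applies), decompose $P^\epsilon_\ell$ as a subpath $P^{\epsilon'}_{\ell-1}$ from $v_0$ to $v_{\ell-1}$ followed by a single edge $v_{\ell-1}v_\ell$, and case-split on the sign of that last edge. Setting $d_{\ell-1}=d_{C^r}(\varphi(v_{\ell-1}),0)$, the inductive hypothesis describes the range of $d_{\ell-1}$, while the last edge forces $\varphi(v_\ell)$ to lie at circular distance at least $1$ (positive case) or at most $\frac{r}{2}-1$ (negative case) from $\varphi(v_{\ell-1})$. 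A direct computation of the union of achievable values of $d_{C^r}(\varphi(v_\ell),0)$, taken over both $d_{\ell-1}$ and the two possible sides of $\varphi(v_{\ell-1})$ on which $\varphi(v_\ell)$ can lie, yields the claimed interval. I would then verify that the two sign choices for the last edge produce the same resulting $Z$-interval, which must be the case because switching at $v_{\ell-1}$ flips both incident edges without altering the endpoint colors of $v_0$ and $v_\ell$.

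The hypothesis $r<\frac{2\ell}{\ell-1}$ enters through two equivalent reformulations: $(\ell-1)(\frac{r}{2}-1)<1$ and $\ell-1>(\ell-2)\frac{r}{2}$. These ensure, respectively, that $Z(P^e_{\ell-1})\subseteq [0,1)$ and that the left endpoint of $Z(P^o_{\ell-1})$ exceeds $\frac{r}{2}-1$; together they prevent wrap-around behaviour on the circle at the inductive step, so that the union of achievable distances is a single interval with the asserted endpoints. The main obstacle I expect is exactly this circular-geometry bookkeeping: because $\varphi(v_\ell)$ has (in general) two candidate positions for each fixed $\varphi(v_{\ell-1})$ and each fixed step distance, and everything lives on a circle of circumference $r$, one must verify in each of the four subcases that the union of resulting distances to $0$ is \emph{exactly} the claimed interval---no more and no less---by invoking the hypothesis at the right moments to rule out spurious wrap-arounds.
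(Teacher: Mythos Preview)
The paper does not prove this lemma; it is quoted from \cite{PZ21+} and stated without proof, so there is no argument in the paper to compare your proposal against. Your inductive approach is a natural and correct way to establish the result: the base case is immediate from the definition of a circular $r$-coloring, the monotonicity $\frac{2\ell}{\ell-1}<\frac{2(\ell-1)}{\ell-2}$ is exactly what is needed to invoke the inductive hypothesis, and your observation that switching at the internal vertex $v_{\ell-1}$ identifies the two sign choices for the last edge is the right way to avoid doing the case analysis twice. The only place requiring care is precisely the one you flag---checking that the union of achievable distances remains a single interval and does not wrap around the circle---and the two inequalities you isolate, $(\ell-1)(\tfrac{r}{2}-1)<1$ and $\ell-1>(\ell-2)\tfrac{r}{2}$, are indeed the ones that control this.
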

   
Combining these two lemmas, where we take $\mathcal{I}_{+}=P^{o}_{\ell}, \mathcal{I}_{-}=P^{e}_{\ell}$ and $t=\ell-(\ell-1)\frac{r}{2}$, we have the following.

\begin{lemma}\label{lem:X_c(T_l)}
For any signed graph $\Omega$, $$\chi_c(T^*_\ell(\Omega))=\frac{2\ell\chi_c(\Omega)}{(\ell-1)\chi_c(\Omega)+2}.$$
\end{lemma}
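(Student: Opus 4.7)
The plan is to realize $T^*_\ell(\Omega)$ as an instance of the substitution construction $\Omega(\mathcal{I}_+,\mathcal{I}_-)$ from Lemma~\ref{lem:indicator} and then apply Lemmas~\ref{lem:indicator} and~\ref{lem:Path-Indicator} exactly as foreshadowed in the sentence preceding the statement.

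First, I would observe that $T^*_\ell(\Omega)$ is (up to switching equivalence, which does not affect the circular chromatic number) the same as $\Omega(P^{o}_\ell, P^{e}_\ell)$ where both path indicators have their two endvertices as the special vertices. Indeed, the defining property of $T^*_\ell(\Omega)$ is that each positive edge of $\Omega$ is replaced by a path of length $\ell$ with an odd number of positive edges (which is by definition $P^o_\ell$) and each negative edge by a path of length $\ell$ with an even number of positive edges (which is $P^e_\ell$). So this is precisely the substitution.

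Next, I would set $r := \chi_c(T^*_\ell(\Omega))$ and verify that the hypothesis of Lemma~\ref{lem:indicator} holds for this $r$, with $t := \ell - (\ell-1)\tfrac{r}{2}$. From Lemma~\ref{lem:Path-Indicator}, $Z(P^o_\ell) = [\ell - (\ell-1)\tfrac{r}{2}, \tfrac{r}{2}] = [t, \tfrac{r}{2}]$ and $Z(P^e_\ell) = [0, \ell\tfrac{r}{2} - \ell]$, and a direct check gives $\tfrac{r}{2} - t = \ell \tfrac{r}{2} - \ell$, so the two sets indeed have the symmetric form required. Applying Lemma~\ref{lem:indicator} then yields the identity
\[
r \;=\; \chi_c(T^*_\ell(\Omega)) \;=\; t\,\chi_c(\Omega) \;=\; \bigl(\ell - (\ell-1)\tfrac{r}{2}\bigr)\chi_c(\Omega).
\]
Writing $c := \chi_c(\Omega)$ and solving this linear equation for $r$ gives
\[
r + \frac{(\ell-1)c}{2}\,r \;=\; \ell c, \qquad \text{hence}\qquad r \;=\; \frac{2\ell c}{(\ell-1)c + 2},
\]
which is the claimed formula.

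The main thing to be careful about is the \emph{scope} of Lemma~\ref{lem:Path-Indicator}: its description of $Z(P^o_\ell), Z(P^e_\ell)$ is only asserted for $r < \tfrac{2\ell}{\ell-1}$, so I must check that the value $r$ produced by the above derivation lies in this range (otherwise the appeal to Lemma~\ref{lem:indicator} is circular). This is an easy sanity check: the formula $r = \tfrac{2\ell c}{(\ell-1)c+2}$ satisfies $r < \tfrac{2\ell}{\ell-1}$ for every $c\geq 2$, since the inequality reduces to $(\ell-1)c < (\ell-1)c+2$. One should also handle the degenerate boundary $c = 2$ (where $r=2$ and the indicators' ranges shrink to single points); here the formula still gives the correct value $r=2$ and can be verified directly, or obtained as a limit from $c > 2$. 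Beyond this, the proof is a clean substitution of the two lemmas, which is why the statement appears as a corollary of them in the text.
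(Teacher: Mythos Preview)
Your proposal is correct and follows exactly the approach the paper intends: the paper's entire ``proof'' is the single sentence preceding the lemma, namely taking $\mathcal{I}_{+}=P^{o}_{\ell}$, $\mathcal{I}_{-}=P^{e}_{\ell}$, $t=\ell-(\ell-1)\tfrac{r}{2}$ and combining Lemmas~\ref{lem:indicator} and~\ref{lem:Path-Indicator}. You have simply unpacked this, added the algebra, and flagged the range condition $r<\tfrac{2\ell}{\ell-1}$ and the degenerate case $c=2$, none of which the paper discusses.
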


A few comments are to be mentioned here. 

The first is to note that for each positive integer $\ell$, and by considering a graph $G$ as a signed graph where all edges are regarded positive, the 4-colorability of $G$ is equivalent to proving that $\chi_c(T^*_\ell(G))\leq \frac{8\ell}{4\ell-2}$. Furthermore, noting that subdivision preserves the planarity, for each choices of $\ell$, we have a reformulation of the $4$-Color Theorem. For each such $\ell$ then one line of study is to introduce an interesting classes of signed graphs that includes $T^*_\ell(G)$ for all planar graphs $G$ and admits the same upper bound for the circular chromatic number.

A second note here is that for even values of $\ell$, $T^*_\ell(\Omega)$ is a signed bipartite graph and thus the subject of the main study in this work. 

And the last note is that the $S(G)$ construction mentioned before is a special case of this indicator construction. Given a graph $G$, one may first build a signed graph $\tilde{G}$ by replacing each edge with a digon. Then $T^*_{2}(\tilde{G})$ is the same as $S(G)$.

\section{Coloring planar signed graphs}

We have already noted that, via constructions such as $S(G)$, most homomorphisms and coloring question can be restated in the language of homomorphisms of singed bipartite graphs. Here we have a look at what this means to the coloring of planar graphs and restate some famous theorems such as the $4$-Color Theorem and Gr\"otzsch's theorem. 

As a direct corollary of Theorem~\ref{thm:S(G)->S(H)}, we have the following reformulation of $4$-Color Theorem.

\begin{theorem}\label{thm:4CT-restated2}{\rm [4CT restated]}
	For any planar graph $G$, we have $S(G)\to S(K_4)$.
\end{theorem}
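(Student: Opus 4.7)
The plan is to combine the 4-Color Theorem with Theorem~\ref{thm:S(G)->S(H)}. The 4-Color Theorem asserts that any planar graph $G$ admits a proper $4$-coloring, which, in the language of homomorphisms, is exactly the statement $G \to K_4$. Applying the ``only if'' direction of Theorem~\ref{thm:S(G)->S(H)} to this homomorphism immediately yields $S(G) \to S(K_4)$, which is the desired conclusion.

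Since Theorem~\ref{thm:S(G)->S(H)} is available as a black box and subdivision of every edge preserves planarity, the entire argument collapses to these two sentences, with the 4-Color Theorem providing all the substantive content. If one wanted to unpack the homomorphism, a proper $4$-coloring $c : V(G) \to V(K_4)$ lifts to a map of signed graphs that sends each original vertex $v$ of $S(G)$ to $c(v)$ and each pair of subdivision vertices $\{x_{uv}, y_{uv}\}$ of $S(G)$ to the pair $\{x_{c(u)c(v)}, y_{c(u)c(v)}\}$ of $S(K_4)$. The sign of each negative $4$-cycle $u x_{uv} v y_{uv}$ of $S(G)$ is then matched by the corresponding negative $4$-cycle of $S(K_4)$, which is the content that Theorem~\ref{thm:S(G)->S(H)} guarantees (up to a switching, since $S(G)$ and $S(K_4)$ are only defined up to switching isomorphism).

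Consequently, there is no genuine obstacle: the purpose of the statement is not to introduce new difficulty but to illustrate how the $S$-construction transparently encodes the $4$-coloring problem for planar graphs as a homomorphism problem inside the class of signed bipartite planar graphs. The only external input needed is the $4$-Color Theorem itself.
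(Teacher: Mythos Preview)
Your proposal is correct and matches the paper's approach exactly: the paper presents this theorem simply as ``a direct corollary of Theorem~\ref{thm:S(G)->S(H)}'' combined with the 4-Color Theorem, which is precisely your argument. The additional unpacking of the explicit map is a helpful elaboration but not required.
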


Noting that the 4-Color Theorem is equivalent to bounding the circular chromatic number of all planar graphs by 4, and applying Theorem~\ref{thm:X(G)-->X(S(G))}, another restatement of 4-Color Theorem is as follows.

\begin{theorem}\label{thm:4CT-restated3}{\rm [4CT restated]}
	For any planar graph $G$, we have $\chi_c(S(G))\leq \frac{16}{5}$.
\end{theorem}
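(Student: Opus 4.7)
The plan is to derive this inequality as a direct arithmetic consequence of two ingredients already available: the $4$-Color Theorem, stated in its circular form as $\chi_c(G)\leq 4$ for every planar graph $G$, and the identity of Theorem~\ref{thm:X(G)-->X(S(G))}, which expresses $\chi_c(S(G))$ as an explicit function of $\chi_c(G)$.

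First I would record the formula
$$\chi_c(S(G)) = 4 - \frac{4}{\chi_c(G)+1},$$
and observe that the right-hand side is strictly increasing as a function of $\chi_c(G)$ on $[1,\infty)$, since its derivative with respect to $\chi_c(G)$ is $\frac{4}{(\chi_c(G)+1)^2}>0$ (alternatively, one can just compare fractions directly). Therefore any upper bound on $\chi_c(G)$ transfers, through this formula, to an upper bound on $\chi_c(S(G))$. Substituting the $4$-Color Theorem bound $\chi_c(G)\leq 4$ then yields
$$\chi_c(S(G)) \;\leq\; 4 - \frac{4}{4+1} \;=\; \frac{16}{5},$$
which is exactly the claimed inequality.

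There is essentially no obstacle internal to this statement: the proposition is a translation of the $4$-Color Theorem through the $S(\cdot)$ construction, and all of the substantive work has been done either in the $4$-Color Theorem itself or in establishing Theorem~\ref{thm:X(G)-->X(S(G))}. The real content of the reformulation is that the signed bipartite graphs of the form $S(G)$ arising from planar $G$ have circular chromatic number at most $\frac{16}{5}$, a value strictly below the generic upper bound of $4$ available for all signed bipartite graphs noted after Lemma~\ref{lem:No-HomByCircularChromatic}. If one wished to avoid invoking $\chi_c(G)\leq 4$ as a black box, the only alternative would be to re-derive the $4$-Color Theorem in the equivalent form $S(G)\to (K_{4,4},M)$ from Theorem~\ref{thm:4CTStrengthened}, combine this with the fact that $\chi_c(K_{4,4},M)=\frac{16}{5}$, and conclude via Lemma~\ref{lem:No-HomByCircularChromatic}; but this simply repackages the same input.
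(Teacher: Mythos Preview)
Your main argument is correct and is exactly the paper's approach: the theorem is stated immediately after the sentence ``Noting that the 4-Color Theorem is equivalent to bounding the circular chromatic number of all planar graphs by $4$, and applying Theorem~\ref{thm:X(G)-->X(S(G))}\ldots'', with no further proof given.

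One correction to your closing remark: the alternative route you sketch does \emph{not} work, because $\chi_c(K_{4,4},M)=4$, not $\tfrac{16}{5}$ (the paper states this explicitly when observing that $(K_{4,4},M)$ and $\hat{B}^s_{16;5}$ are homomorphically incomparable). Mapping $S(G)$ to $(K_{4,4},M)$ and then invoking Lemma~\ref{lem:No-HomByCircularChromatic} would only yield $\chi_c(S(G))\leq 4$, which is trivial for signed bipartite graphs. The bound $\tfrac{16}{5}$ really does require passing through Theorem~\ref{thm:X(G)-->X(S(G))} (or, equivalently, through $\hat{B}^s_{16;5}$ rather than $(K_{4,4},M)$).
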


Since every $S(G)$ is a signed bipartite graph, the claim of this theorem is equivalent to the existence of an edge-sign preserving mapping from $S(G)$ to $B^s_{16;5}$. Note that $\hat{B}^s_{16;5}$, the switching core of $B^s_{16;5}$, is a signed graph on $K_{4,4}$. With one random choice of a signature, (among all equivalent signatures), this core is presented in Figure~\ref{fig:hatB16;5}. We recall that the 4-Color Theorem is also restated in Theorem~\ref{thm:Restated4CT} in the form of mapping $S(G)$, for every planar $G$, to $(K_{4,4}, M)$.

Let $\mathcal{P}'$ be the class of all simple planar graphs and let $S(\mathcal{P}')=\{S(G) \mid G\in \mathcal{P}'\}$. 
Then, by the discussion above, the 4-Color Theorem is equivalent to bounding the class $S(\mathcal{P}')$ by either of the signed bipartite graphs of Figures~\ref{fig:S(K4)}, \ref{fig:K44M}, or \ref{fig:hatB16;5}. 
One may observe that $S(K_4)$ admits a homomorphism to both $(K_{4,4}, M)$ and $\hat{B}^s_{16;5}$ but that $(K_{4,4}, M)$ and $\hat{B}^s_{16;5}$ are homomorphically incomparable. The latter is a consequence of the following two facts: 1. Any pair of nonadjacent vertices in $(K_{4,4}, M)$ or in $\hat{B}^s_{16;5}$ belongs to a negative $4$-cycle which means identifying them would result in a digon, 2. the two signed graphs are not switching isomorphic, for example, $\chi_c(K_{4,4}, M)=4$ and $\chi_c(\hat{B}^s_{16;5})=\frac{16}{5}$.

\begin{figure}[htbp]
	\centering
\begin{minipage}[t]{.3\textwidth}
	\centering
	\begin{tikzpicture}
	[scale=.32]
	\draw (0,0) node[circle, draw=black!80, inner sep=0mm, minimum size=3.3mm] (w){\scriptsize $w$};
	\draw[rotate=0] (0,6) node[circle, draw=black!80, inner sep=0mm, minimum size=3.3mm] (x){\scriptsize $x$};
	\draw[rotate=120] (0,6) node[circle, draw=black!80, inner sep=0mm, minimum size=3.3mm] (y){\scriptsize $y$};
	\draw[rotate=240] (0,6) node[circle, draw=black!80, inner sep=0mm, minimum size=3.3mm] (z){\scriptsize $z$};
	
	\draw[rotate=60] (0,6) node[circle, draw=black!80, inner sep=0mm, minimum size=1.7mm] (v_{xy}){};
	\draw[rotate=180] (0,6)  node[circle, draw=black!80, inner sep=0mm, minimum size=1.7mm] (v_{yz}){};
	\draw[rotate=300] (0,6) node[circle, draw=black!80, inner sep=0mm, minimum size=1.7mm] (v_{xz}){};

	\draw [rotate=60] (0,4.5) node[circle, draw=black!80, inner sep=0mm, minimum size=1.7mm] (u_{xy}){};
	\draw [rotate=180] (0,4.5) node[circle, draw=black!80, inner sep=0mm, minimum size=1.7mm] (u_{yz}){};
	\draw [rotate=300] (0,4.5) node[circle, draw=black!80, inner sep=0mm, minimum size=1.7mm] (u_{xz}){};

	\draw [rotate=15] (0,3) node[circle, draw=black!80, inner sep=0mm, minimum size=1.7mm] (u_{xw}){};
	\draw [rotate=135] (0,3) node[circle, draw=black!80, inner sep=0mm, minimum size=1.7mm] (u_{yw}){};	
	\draw [rotate=255] (0,3) node[circle, draw=black!80, inner sep=0mm, minimum size=1.7mm] (u_{zw}){};	
	
	\draw [rotate=-15] (0,3) node[circle, draw=black!80, inner sep=0mm, minimum size=1.7mm] (v_{xw}){};
	\draw [rotate=105] (0,3) node[circle, draw=black!80, inner sep=0mm, minimum size=1.7mm] (v_{yw}){};		
	\draw [rotate=225] (0,3) node[circle, draw=black!80, inner sep=0mm, minimum size=1.7mm] (v_{zw}){};	
	
	\draw [line width=0.4mm, blue] (x) -- (v_{xy});
	\draw [line width=0.4mm, blue] (x) -- (u_{xy});
	\draw [line width=0.4mm, blue] (x) -- (u_{xz});
	\draw [line width=0.4mm, blue] (x) -- (u_{xw});
	\draw [line width=0.4mm, blue] (x) -- (v_{xw});
	\draw [densely dotted, line width=0.4mm, red] (x) -- (v_{xz});
	\draw [line width=0.4mm, blue] (y) -- (v_{yz});
	\draw [line width=0.4mm, blue] (y) -- (u_{xy});
	\draw [line width=0.4mm, blue] (y) -- (u_{yz});
	\draw [line width=0.4mm, blue] (y) -- (u_{yw});
	\draw [line width=0.4mm, blue] (y) -- (v_{yw});
	\draw [densely dotted, line width=0.4mm, red] (y) -- (v_{xy});
	\draw [line width=0.4mm, blue] (z) -- (u_{yz});
	\draw [line width=0.4mm, blue] (z) -- (v_{xz});
	\draw [line width=0.4mm, blue] (z) -- (u_{xz});
	\draw [line width=0.4mm, blue] (z) -- (u_{zw});
	\draw [line width=0.4mm, blue] (z) -- (v_{zw});
	\draw [densely dotted, line width=0.4mm, red] (z) -- (v_{yz});
	\draw [line width=0.4mm, blue] (w) -- (u_{xw});
	\draw [line width=0.4mm, blue] (w) -- (u_{yw});
	\draw [line width=0.4mm, blue] (w) -- (u_{zw});
	\draw [densely dotted, line width=0.4mm, red] (w) -- (v_{xw});
	\draw [densely dotted, line width=0.4mm, red] (w) -- (v_{yw});
	\draw [densely dotted, line width=0.4mm, red] (w) -- (v_{zw});		
\end{tikzpicture}
\caption{$S(K_4)$}
\label{fig:S(K4)}
\end{minipage}\hfil
\begin{minipage}[t]{.3\textwidth}
	\centering
	\begin{tikzpicture}
		[scale=.2]
		\foreach \i/\j in {1/1,2/3,3/5,4/7}
		{
			\draw(6*\i, 0) node[circle, draw=black!80, inner sep=0mm, minimum size=3.3mm] (x\j){\scriptsize ${\j}$};
		}
		
		\foreach \i/\j in {1/2,2/4,3/6,4/8}
		{
			\draw(6*\i, -12) node[circle, draw=black!80, inner sep=0mm, minimum size=3.3mm] (x\j){\scriptsize ${\j}$};
		}
		
		\foreach \i/\j in {1/4,1/6,1/8,3/2,3/6,3/8,5/2,5/4,5/8,7/2,7/4,7/6}
		{
			\draw  [bend left=18, line width=0.4mm, blue] (x\i) -- (x\j);
		}
		
		\foreach \i/\j in {1/2,3/4,5/6,7/8}
		{
			\draw  [bend left=18, densely dotted, line width=0.4mm, red] (x\i) -- (x\j);
		}	
\draw (10,-16) node[circle, draw=white!80, inner sep=0mm, minimum size=3.3mm] (FAKE){};
	\end{tikzpicture}
	\caption{$(K_{4,4}, M)$}
	\label{fig:K44M}
\end{minipage}\hfil
\begin{minipage}[t]{.3\textwidth}
	\centering
	\begin{tikzpicture}
		[scale=.2]
		\foreach \i/\j in {1/1,2/3,3/7,4/5}
		{
			\draw(6*\i, 0) node[circle, draw=black!80, inner sep=0mm, minimum size=3.3mm] (x\j){\scriptsize ${\j}$};
		}
		
		\foreach \i/\j in {1/2,2/4,3/6,4/8}
		{
			\draw(6*\i, -12) node[circle, draw=black!80, inner sep=0mm, minimum size=3.3mm] (x\j){\scriptsize ${\j}$};
		}
		
		\foreach \i/\j in {1/4,1/6,3/2,3/6,3/8,5/2,5/6,7/2,7/4,7/8}
		{
			\draw  [bend left=18, line width=0.4mm, blue] (x\i) -- (x\j);
		}
		
		\foreach \i/\j in {1/2,1/8,6/7,3/4,4/5,5/8}
		{
			\draw  [bend left=18, densely dotted, line width=0.4mm, red] (x\i) -- (x\j);
		}	
\draw (10,-16) node[circle, draw=white!80, inner sep=0mm, minimum size=3.3mm] (FAKE){};
	\end{tikzpicture}
	\caption{$\hat{B}^s_{16;5}$}
	\label{fig:hatB16;5}
	\end{minipage}
\end{figure}

As $S(K_4)$ itself is in the family $S(\mathcal{P}')$, one does not expect much room to strengthen the result regarding this one beyond expected strengthening of the 4-Color Theorem. For example, the statement holds when $G$ is $K_5$-minor-free, and is expected to hold if $(G,-)$ has no $(K_5,-)$-minor. 

Since $S(K_4)$ admits a homomorphism to each of $(K_{4,4},M)$ and $\hat{B}^s_{16;5}$, it would not be a surprise if a stronger statement can be proved regarding these two targets. Indeed that is the case for $(K_{4,4}, M)$: it bounds the class of all signed bipartite planar simple graphs \cite{NRS13}. As the limit of the circular chromatic number of signed bipartite planar simple graphs is $4$ (see \cite{NWZ21} and  \cite{KNNW21+}), this cannot be the case for $\hat{B}^s_{16;5}$. Thus it remains an open question to bound a larger class of signed bipartite planar graphs with $\hat{B}^s_{16;5}$. 

It is shown in \cite{NPW20+} that every signed bipartite planar graph of negative girth at least $8$ maps to $C_{-4}$ and that this girth condition cannot be improved to $6$. We observe that $C_{-4}$ is a subgraph of each of the three homomorphism targets of this discussion.

Another common subgraph of $(K_{4,4},M)$ and $\hat{B}^s_{16;5}$ which is of high interest for this discussion is $(K_{3,3}, M)$. A restatement of the Gr\"otzsch theorem is the following.

\begin{theorem}{\rm [Gr\"otzsch theorem restated]}
For any triangle-free planar graph $G$ (with no loop), we have $\chi_c(S(G))\leq 3$.
\end{theorem}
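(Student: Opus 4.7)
The plan is to observe that this is a direct restatement of the classical Grötzsch theorem via the translation given by Theorem~\ref{thm:X(G)-->X(S(G))}. The classical Grötzsch theorem asserts that every triangle-free planar graph $G$ is $3$-colorable, i.e.\ $\chi(G)\leq 3$. Since the circular chromatic number never exceeds the chromatic number, this immediately gives $\chi_c(G)\leq 3$ for any triangle-free planar graph $G$.

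With $\chi_c(G)\leq 3$ in hand, I would then invoke the equivalence stated in the second bullet point following Theorem~\ref{thm:X(G)-->X(S(G))}, namely that $\chi_c(G)\leq 3$ if and only if $\chi_c(S(G))\leq 3$. Concretely, the displayed formula $\chi_c(S(G))=4-\frac{4}{\chi_c(G)+1}$ evaluates to at most $3$ precisely when $\chi_c(G)+1\leq 4$, so $\chi_c(G)\leq 3$ translates into $\chi_c(S(G))\leq 3$, which is the desired conclusion.

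There is essentially no obstacle here beyond assembling the two known ingredients: the original Grötzsch theorem (used as a black box) and the structural identity from Theorem~\ref{thm:X(G)-->X(S(G))} (established earlier in the paper). The role of the present statement is reformulative rather than technical — it repackages a triangle-freeness hypothesis on $G$ as a negative-girth hypothesis on the bipartite signed graph $S(G)$, aligning Grötzsch's theorem with the signed-bipartite circular clique $(K_{3,3},M)$ by the corollary preceding this discussion. No further case analysis or combinatorial argument is required to conclude the proof.
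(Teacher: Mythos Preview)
Your proposal is correct and matches the paper's approach exactly: the paper presents this theorem as a direct restatement of Gr\"otzsch's theorem (without a separate proof), the justification being precisely the combination of the classical Gr\"otzsch theorem with the equivalence $\chi_c(G)\leq 3 \Leftrightarrow \chi_c(S(G))\leq 3$ derived from Theorem~\ref{thm:X(G)-->X(S(G))}.
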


In the next section, we prove Theorem~\ref{thm:main} which may be viewed as a parallel theorem to Gr\"otzsch's theorem. In Section~\ref{sec:Conclusion} then we propose a question as potentially common strengthening of the two theorems.

\subsection{Bounding the circular chromatic number by 3}\label{sec:CircularChromaticNumber_3}

For a class $\mathcal{C}$ of signed graphs, we define $\chi_c(\mathcal{C})=sup\{\chi_c(G, \sigma): (G, \sigma)\in \mathcal{C}\}$. 

For a given integer $k$, let $\mathcal{P}^*_{k}$ be the class of singed planar graphs $(G, \sigma)$ such that the signed graph $(G, -\sigma)$ satisfies the following conditions: for each $ij\in \mathbb{Z}_2^2$, we have $g_{ij}(G, -\sigma)\geq g_{ij}(C_{-k})$. Thus for an odd integer $k$, and after suitable switchings, $\mathcal{P}^*_{k}$ consists of all planar graphs of odd girth at least $k$ with all edges being assigned positive signs. For an even value of $k$, the class $\mathcal{P}^*_{k}$ consists of all signed planar bipartite graphs of negative girth at least $k$. 

A main question then is to find $\chi_c(\mathcal{P}^*_{k})$ for each $k$. For $k=3$ and $4$ both answers are $4$, the first by the 4-Color Theorem, the second by the observation that $4$ is the upper bound for the class of signed bipartite simple graphs and a construction given in \cite{NWZ21} showing that $4$ cannot be improved (see also \cite{KNNW21+}).
For $k=5$, we have the Gr\"otzsch theorem, that gives upper bound of $3$ which is also shown to be the optimal value. For $k=4p+1$, this question is the subject of widely studied Jaeger-Zhang conjecture. And for other values of $k$, similar conjectures are proposed. Here, addressing the case $k=6$ we prove the followings.

\begin{theorem}\label{thm:BoundingXc(P6)}
We have $\dfrac{14}{5}\leq \chi_c(\mathcal{P}^*_{6})\leq 3$.
\end{theorem}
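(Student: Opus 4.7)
My plan for proving Theorem~\ref{thm:BoundingXc(P6)} addresses the two bounds separately.

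For the upper bound $\chi_c(\mathcal{P}^*_6) \leq 3$, which is the restatement of Theorem~\ref{thm:main}, I apply the corollary to Theorem~\ref{thm:BK} so that it suffices to show every signed bipartite planar graph $(G, \sigma)$ of negative girth at least $6$ admits a homomorphism to $(K_{3,3}, M)$. Unpacking this, we need a signature $\sigma' \sim \sigma$ and a map $f \colon V(G) \to \{1,2,3\}$ (respecting the bipartition) such that positive edges of $(G, \sigma')$ are bichromatic and negative edges are monochromatic. Following the paper's indication that the proof is based on the $4$-Color Theorem, my plan is a minimum-counterexample approach combined with discharging. On a smallest counterexample, Euler's formula together with the negative-girth hypothesis (which forces every $4$-face to be positive and every negative face to have length at least $6$) produces, through an appropriate discharging scheme, an unavoidable reducible configuration. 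Each such configuration is then handled either by an inductive reduction or by invoking the $4$-Color Theorem on a suitable auxiliary planar graph whose $4$-colorings translate back into the required $3$-coloring, contradicting minimality. The main obstacle is designing the discharging scheme and the auxiliary construction so that they interact correctly with the bipartite signed structure.

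For the lower bound $\chi_c(\mathcal{P}^*_6) \geq 14/5$, my plan is to exhibit an explicit graph in $\mathcal{P}^*_6$ with circular chromatic number at least $14/5$. Using the indicator construction of Lemma~\ref{lem:X_c(T_l)}, $\chi_c(T^*_2(\Omega)) = \tfrac{4\chi_c(\Omega)}{\chi_c(\Omega)+2}$, which is at least $14/5$ exactly when $\chi_c(\Omega) \geq 14/3$. The subdivision $T^*_2(\Omega)$ is automatically bipartite (with $V(\Omega)$ and the subdivision vertices forming the two parts) and preserves planarity; moreover, if $\Omega$ is simple then every cycle of $T^*_2(\Omega)$ has length at least $2 \cdot 3 = 6$, so $T^*_2(\Omega)$ has negative girth at least $6$ and lies in $\mathcal{P}^*_6$. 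Hence the lower bound reduces to producing a simple planar signed graph $\Omega$ with $\chi_c(\Omega) \geq 14/3$. Since every signed bipartite graph satisfies $\chi_c \leq 4 < 14/3$, such $\Omega$ must be non-bipartite, and its existence can be established either by adapting known constructions of non-bipartite signed planar graphs with high $\chi_c$, or by specifying a small concrete example and verifying the bound by ruling out homomorphisms to each $B_{p;q}$ with $p/q < 14/3$ via the no-homomorphism Lemma~\ref{lem:no-hom}. The main obstacle is to produce such an $\Omega$ and verify its circular chromatic number.
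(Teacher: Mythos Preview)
Your lower-bound plan is essentially the paper's: take a simple planar signed graph $\Omega$ with $\chi_c(\Omega)=\tfrac{14}{3}$ (such an example is cited from \cite{NWZ21}), apply Lemma~\ref{lem:X_c(T_l)} with $\ell=2$ to get $\chi_c(T^*_2(\Omega))=\tfrac{14}{5}$, and observe that $T^*_2(\Omega)\in\mathcal{P}^*_6$. So that half is fine, modulo actually exhibiting or citing the example.

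Your upper-bound plan, however, is both different from the paper's argument and not yet a proof. You propose a direct minimum-counterexample-plus-discharging attack, with the $4$-Color Theorem invoked on unspecified auxiliary graphs to reduce unspecified configurations. None of the discharging rules, the list of reducible configurations, or the auxiliary constructions are given, and there is no indication that such a scheme exists; this is an outline of a hope rather than a proof. The paper's route is quite different and much shorter. It invokes Theorem~\ref{thm:packing} (a signature-packing result, itself relying on the $4$-Color Theorem): there exist six pairwise disjoint edge sets $E_1,\dots,E_6$ each of which, taken as the set of negative edges, gives a signature equivalent to $\sigma$. One then contracts all edges of $E_1$; any odd cycle $C'$ of the resulting planar graph $G'$ comes from a negative cycle $C$ of $(G,\sigma_1)$, hence of every $(G,\sigma_i)$, so $C'$ still contains at least one edge from each of $E_2,\dots,E_6$ and thus has length at least $5$. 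Now Gr\"otzsch's theorem gives a proper $3$-coloring $\varphi$ of $G'$, and sending $u$ to $x_{\varphi(u)}$ or $y_{\varphi(u)}$ according to its side of the bipartition is an edge-sign preserving homomorphism of $(G,\sigma_1)$ to $(K_{3,3},M)$. The key idea you are missing is precisely this packing step: it is what converts the negative-girth-$6$ hypothesis into the triangle-freeness needed to apply Gr\"otzsch, and it is where the $4$-Color Theorem actually enters.
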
 

The proof of the upper bound is based on the following theorem which is implied by combining results from \cite{DKK16} and \cite{NRS13}, noting that the $4$-Color Theorem is used in proving this claim.

\begin{theorem}\label{thm:packing}
Given a signed bipartite planar graph $(G,\sigma)$ of negative girth at least $6$, one can find six disjoint subsets of edges, $E_1, E_2, \ldots, E_6$, such that each of the signed graphs $(G, \sigma_i)$, $i\in \{1,2,\ldots, 6\}$, where $E_i$ is the set of negative edges of $(G, \sigma_i)$, is switching equivalent to $(G, \sigma)$.
\end{theorem}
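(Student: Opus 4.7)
The plan is to exploit planar duality to recast the assertion as a T-join packing problem in the plane dual, and then invoke the deep packing result that the authors extract from \cite{DKK16} and \cite{NRS13} (which is in turn where the $4$-Color Theorem enters).

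First, I would set up the duality dictionary. Let $G^*$ be the plane dual of $G$. Since every face of $G$ has even length (bipartiteness), $G^*$ is Eulerian. Let $T\subseteq V(G^*)$ consist of the dual vertices corresponding to negative faces of $(G,\sigma)$; because each edge lies on exactly two faces, the product of all face-signs is $+1$, so $|T|$ is even. Under the identification $E(G^*)=E(G)$, a subset $J\subseteq E(G)$ is a T-join of $G^*$ precisely when, for every face $f$ of $G$, the parity of $|J\cap\partial f|$ matches whether $f$ is negative. In particular, the negative edge set $E_\sigma$ is a T-join of $G^*$, and more generally two signatures $\sigma_1,\sigma_2$ on $G$ are switching equivalent iff they determine the same set of negative faces iff $E_{\sigma_1}$ and $E_{\sigma_2}$ are both T-joins for the \emph{same} $T$. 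Hence producing the required $E_1,\ldots,E_6$ amounts to packing six pairwise edge-disjoint T-joins of $G^*$ for this fixed $T$.

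Second, I would translate the negative girth hypothesis. Under planar duality, cycles of $G$ correspond to bonds of $G^*$, and a cycle $C$ of $G$ is negative iff the side of the corresponding bond meets $T$ in an odd number of vertices, i.e., iff that bond is a T-cut. Conversely, a minimum T-cut in $G^*$ is attained by a bond, which corresponds to a negative cycle of $G$. Therefore ``negative girth at least $6$'' is exactly ``every T-cut of $G^*$ has at least $6$ edges''. The theorem thus reduces to the combinatorial statement: \emph{in a planar Eulerian graph with $|T|$ even and minimum T-cut at least $6$, there exist six pairwise edge-disjoint T-joins.} Applying this to $(G^*,T)$ produces T-joins $J_1,\ldots,J_6$; setting $E_i=J_i$ and letting $\sigma_i$ be the signature of $G$ whose negative edge set is $E_i$, the dictionary from step one guarantees each $(G,\sigma_i)$ is switching equivalent to $(G,\sigma)$.

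The main obstacle is the T-join packing step itself: this is essentially a $4$-Color-Theorem-equivalent packing statement and constitutes the deep content, which is why the theorem is attributed to a \emph{combination} of \cite{DKK16} and \cite{NRS13}. The duality translation carried out in steps one and two is routine but must be done carefully enough to verify that the hypotheses of the packing theorem are matched exactly: that $G^*$ is Eulerian, that the designated set $T\subseteq V(G^*)$ has even size, and that the minimum T-cut bound of $6$ is precisely the dual form of negative girth $\geq 6$. Once this is in place, the cited packing result delivers the six T-joins and, via the dictionary, the six required signatures.
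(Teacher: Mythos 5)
Your proposal is correct and is exactly the argument the paper has in mind: the paper states this theorem as a consequence of combining \cite{DKK16} with \cite{NRS13} and gives no further proof, the role of \cite{NRS13} being precisely the planar-duality dictionary you set up (signatures modulo switching correspond to $T$-joins of $G^*$ for the fixed $T$ of negative faces, and negative girth corresponds to the minimum $T$-cut size), and the role of \cite{DKK16} being the six-$T$-join packing theorem. Your Eulerian observation is the right check, since it supplies the parity hypothesis of that packing theorem (all $T$-cuts of $G^*$ have the same, namely even, size), so the hypotheses are matched exactly.
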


In other words, the signature packing number of $(G, \sigma)$ is at least 6 (see \cite{NY21+} for more details).
\bigskip

\begin{proof*}\ref{thm:BoundingXc(P6)}.
Let $(G, \sigma)$ be a signed bipartite planar graph of negative girth at least $6$
with a bipartition $(A, B)$. By Theorem~\ref{thm:packing}, there are disjoint subsets $E_1, E_2, \ldots, E_{6}$ of edges of $G$ such that for each $i\in [6]$, the signature $\sigma_i$, whose negative edges are $E_i$, is equivalent to $\sigma$.

We consider the signed graph $(G, \sigma_1)$ where the set of negative edges is $E_1$. Let $G'$ be the graph obtained from $G$ by contracting all the edges in $E_1$. In this notion of contracting, we delete the contracted edge (those in $E_1$) but all other edges remain. Thus in theory we may have loops and parallel edge in the resulting graph. However, we show next that not only $G'$ has no loop, it has no triangle either. In other words, we claim that every odd cycle of $G'$ is of length at least $5$. 

To see this, let $C'$ be an odd cycle of $G'$. This cycle is obtained from a cycle $C$ of $G$ by contracting some edges (of $E_1$).  As $G$ is bipartite, $C$ must be of even length. Thus the number of the contracted edges is odd. Therefore, $C$ is a negative cycle in the signed graph $(G,\sigma_1)$. As all the $(G, \sigma_i)$, $i=1,2,\ldots, 6$, are equivalent, $C$ is negative in all of them which means it has an odd number of edges from each of $E_i$'s. As these sets are disjoint, and as for $i=2, 3, \ldots 6$, they still present in $G'$, the cycle $C'$ has an odd number of edges from each $E_i$, $i=2, 3, \ldots 6$. In particular, that is at least one edge from each, and noting again that they are disjoint sets, we conclude that $C'$ is of length at least $5$.

Having shown that $G'$ is a triangle-free planar graph with no loop (might have parallel edges), we may apply the Gr\"otzsch theorem to obtain a $3$-coloring $\varphi: V(G')\to \{1,2,3\}$ of $G'$. Let $(X, Y)$ be the bipartition of $(K_{3,3}, M)$. Label the vertices $X=\{x_1,x_2,x_3\}$ and $Y=\{y_1,y_2,y_3\}$ such that $\{x_1y_1,x_2y_2,x_3y_3\}$ is the set of negative edges. 

We define the mapping $\psi$ of $(G,\sigma_1)$ to $(K_{3,3}, M)$ as follows:
\[
\psi(u) = \begin{cases} x_i, &\text{ if $u\in A$ and $\varphi(u)=i$} \cr
	y_i, &\text{ if $u \in B$ and $\varphi(u)=i$}. \cr
\end{cases}
\] 

It remains to show that $\psi$ is an edge-sign preserving mapping of $(G,\sigma_1)$ to $(K_{3,3}, M)$.
For any positive edge $uv$ of $(G, \sigma_1)$, without loss of generality, we may assume that $u\in A, v\in B$ and that $\varphi(u)=i$, $\varphi(v)=j$. Noting that $uv$ is also an edge of $G'$, as $\varphi$ is a proper $3$-coloring, we have that $i \neq j$. Thus $\psi(u)\psi(v)=x_ix_j$ is a positive edge in $(K_{3,3}, M)$. For any negative edge $uv$ of $(G, \sigma_1)$, without loss of generality, assume $u\in A, v\in B$. As $uv\in E_1$ is contracted to a vertex to obtain $G'$, $\varphi(u)=\varphi(v)=i$. So $\psi(u)\psi(v)=x_iy_i$ is a negative edge. Hence, $\psi$ is an edge-sign preserving homomorphism of $(G, \sigma_1)$ to $(K_{3,3, M})$.

This complete the proof of the upper bound. For the lower bound, the best is to give an example. An example of a simple planar graph $(G, \sigma)$ satisfying $\chi_c(G, \sigma)=\frac{14}{3}$ is given in \cite{NWZ21}. Then it follows from Lemma~\ref{lem:X_c(T_l)} that $\chi_c (T^*_2(G, \sigma))=\frac{14}{5}$. It is easily observed that, since $(G, \sigma)$ is a signed simple planar graph, $T^*_2(G, \sigma)$ has (negative) girth at least $6$ and obviously it is a signed bipartite graph.
\end{proof*}

\subsection{Bounds based on girth} 

We will denote the class of all signed planar graphs with $\mathcal{P}$, where we will allow loops and multi-edges. The subclass of $\mathcal{P}$ where the shortest cycle of each member is at least $k$ will be denoted by $\mathcal{P}_k$. Thus, in particular, $\mathcal{P}_2$ is the class of all loop-free signed planar graphs and $\mathcal{P}_3$ is the class of all signed planar simple graphs.

We note that $\mathcal{P}^*_k$ is not a subclass of $\mathcal{P}_k$ as signed graphs in $\mathcal{P}^*_k$ may have positive even cycles of any length. However, it is expected that the circular chromatic number of $\mathcal{P}^*_k$ is determined by the subclass $\mathcal{P}^*_k \cap \mathcal{P}_k$.

The questions of determining $\chi_c(\mathcal{P}^*_k)$ is closely related to some of the most well known theorems and conjectures in the theory of graph coloring, such the $4$-Color theorem, Gr\"otzsch's theorem and Jaeger-Zhang conjecture. This also leads to the importance of the question of determining $\chi_c(\mathcal{P}_k)$. In the table below we summarize the best known results for these questions for some value of $k$.

\newpage
\begin{center}
	{\bf  Circular chromatic number of $\mathcal{P}^*_k$ and $\mathcal{P}_k$}
	\fontsize{10}{10}
	\begin{tabular}{ | l | l | l | l | l | }
		\hline
		$k$ & Bounds on $\chi_c(\mathcal{P}^*_k)$ & Reference & Bounds on $\chi_c(\mathcal{P}_k)$ & Reference \\ \hline

		2 & $\chi_c(\mathcal{P}^*_2)=4$  & [Bipartite] & $\chi_c(\mathcal{P}_2)=8$ & [4CT] \\ \hline

		3 & $\chi_c(\mathcal{P}^*_3)=4$ & [4CT] & $ \chi_c(\mathcal{P}_3) \leq 6$ & \cite{NWZ21} \\ \hline

		4 & $\chi_c(\mathcal{P}^*_4)\cong 4$ & \cite{KNNW21+} & $\chi_c(\mathcal{P}_4)\leq 4$ & \cite{MRS16}\\ \hline

		5 & $\chi_c(\mathcal{P}^*_5)= 3$  &\cite{G59}, \cite{SY89}  & $\ast$ & \\ \hline

		6 & $\chi_c(\mathcal{P}^*_6) \leq 3$ &[this paper] & $\ast$ &\\ \hline

		7 & $\ast$ & & $\chi_c(\mathcal{P}_7)\leq 3$ & \cite{NSWX20+} \\ \hline

		8 & $\chi_c(\mathcal{P}^*_8)\cong \frac{8}{3}$ &\cite{NPW20+} & $\ast$ &\\ \hline

		11 &  $\chi_c(\mathcal{P}^*_{11})\leq \frac{5}{2}$ &\cite{DP17}, \cite{CL20} & $\ast$ &\\ \hline

		14 &  $\chi_c(\mathcal{P}^*_{14})\leq \frac{12}{5}$ &\cite{LWW21+}  & $\ast$ & \\ \hline

		17 & $\chi_c(\mathcal{P}^*_{17})\leq \frac{7}{3}$ &\cite{CL20}, \cite{PS21} & $\ast$ & \\ \hline

		$\cdots$ & $\cdots$ & $\cdots$ & $\cdots$ & \\ \hline

		$6p-2$ & $\chi_c(\mathcal{P}^*_{6p-2})\leq \frac{4p}{2p-1}$ &\cite{LNWZ21+} & $\chi_c(\mathcal{P}_{6p-2})\leq \frac{8p-2}{4p-3}$ &\cite{LNWZ21+} \\ \hline
		$6p-1$ & $\chi_c(\mathcal{P}^*_{6p-1})\leq \frac{4p}{2p-1}$ &\cite{LWZ20} & $\chi_c(\mathcal{P}_{6p-1})\leq \frac{4p}{2p-1}$ &\cite{LNWZ21+} \\ \hline
		$6p$ & $\ast$ & &$\chi_c(\mathcal{P}_{6p})< \frac{4p}{2p-1}$ &\cite{LNWZ21+} \\ \hline
		$6p+1$ & $\chi_c(\mathcal{P}^*_{6p+1})\leq \frac{2p+1}{p}$ &\cite{LTWZ13} & $\chi_c(\mathcal{P}_{6p+1})\leq \frac{8p+2}{4p-1}$  &\cite{LNWZ21+}\\ \hline
		$6p+2$ & $\ast$ &  & $\chi_c(\mathcal{P}_{6p+2})\leq \frac{2p+1}{p}$  &\cite{LNWZ21+}\\ \hline
		$6p+3$ & $\chi_c(\mathcal{P}^*_{6p+3})<\frac{2p+1}{p}$ &\cite{LWZ20} & $\chi_c(\mathcal{P}_{6p+3})<\frac{2p+1}{p}$ &\cite{LNWZ21+}\\ \hline
		\end{tabular}
\end{center}

In this table, when we write $\chi_c(\mathcal{C})=r$, it means that $\chi_c(\hat{G})\leq r$ for each member $\hat{G}$ of the class $\mathcal{C}$ and that the equality is known to hold for at least one member of the class. When we write $\chi_c(\mathcal{C})\cong r$, we mean that there is a sequence of signed graphs of $\mathcal{C}$ whose limit of the circular chromatic number is $r$. In such cases, sometimes it is verified that the $r$ is never reached by a single member of $\mathcal{C}$. For example, this is indeed the case for $\mathcal{P}^*_4$ as shown in \cite{KNNW21+}. In other cases, it is not known if the equality holds for some members or the inequality is always strict. In particular, for $\mathcal{P}^*_8$ the sequence that gives the limit of $\frac{8}{3}$ is $\{T^*_{2}(\Gamma_i)\}$ where $\Gamma_i$ is the sequence reaching the limit of $4$ for $\mathcal{P}^*_4$. It remains an open problem whether the equality can be reached in this case. 

There are some trivial inclusion among the classes considered here:  $\mathcal{P}^*_{k+2} \subseteq \mathcal{P}^*_{k}$ and $\mathcal{P}_{k+1} \subseteq \mathcal{P}_{k}$. In such cases, any upper bound for the larger class works also on the smaller one and any lower bound for the smaller one works on the larger one as well. In the entries of the table where we write $\ast$ the best known bounds come from the other entries of the table based on these inclusion.

To tight the gap in the bounds or, more ambitiously, to determine the exact values, is the subject of some of main work in the theory of coloring planar graphs. A notable conjecture is that of Jaeger-Zhang which can be restated as:

\begin{conjecture}{\rm [Jaeger-Zhang Conjecture]}\label{conj:JZ}
Given a positive integer $p$, we have
 $\chi_c(\mathcal{P}^*_{4p+1})\leq \frac{2p+1}{p}$. 
\end{conjecture}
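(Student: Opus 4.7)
The final statement is the Jaeger--Zhang conjecture, which is open for all $p \geq 2$ and is widely regarded as one of the central problems in planar graph coloring; the base case $p=1$ is Gr\"otzsch's theorem. I therefore cannot expect to prove it outright, but I can describe a natural attack that extends the signature-packing argument of Theorem~\ref{thm:BoundingXc(P6)}, and point to the step that has resisted all known techniques.

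The plan is first to translate the conjecture into the signed-bipartite setting of this paper. An ordinary planar graph $G \in \mathcal{P}^*_{4p+1}$ (all edges viewed positive) satisfies $\chi_c(G) \leq \frac{2p+1}{p}$ if and only if, by Lemma~\ref{lem:X_c(T_l)}, $\chi_c(T^*_2(G)) \leq \frac{8p+4}{4p+1}$. Now $T^*_2(G)$ is signed bipartite planar, and because each odd cycle of length $k$ in $G$ produces a negative cycle of length $2k$ in $T^*_2(G)$, its negative girth is at least $8p+2$. By Theorem~\ref{thm:BK} it then suffices to construct a (switching) homomorphism $T^*_2(G) \to B_{8p+4;\,4p+1}$, or equivalently, to find a circular $\frac{8p+4}{4p+1}$-coloring that respects the bipartition.

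The second step is to mimic the proof of Theorem~\ref{thm:BoundingXc(P6)}. One would try to pack $2p+1$ pairwise disjoint edge sets $E_1,\ldots,E_{2p+1}$ of $T^*_2(G)$ such that each $E_i$ is the negative edge set of a signature equivalent to the given one (a signature packing of strength $2p+1$). Contracting $E_1$ in the underlying graph yields a planar graph $G'$; a cycle $C$ of $G'$ that is odd must come from an even cycle of $T^*_2(G)$ that was originally negative (by the parity argument used for Theorem~\ref{thm:BoundingXc(P6)}), hence meets each $E_i$ in an odd number of edges. Since the $E_i$ are disjoint, this would force odd girth of $G'$ at least $2p+1$, after which a Gr\"otzsch-type coloring of $G'$, together with the bipartition, could be pulled back to a homomorphism to $B_{8p+4;\,4p+1}$ by the same recipe as in Theorem~\ref{thm:BoundingXc(P6)} (distributing the $2p+1$ color classes across the two sides via a matching of antipodals).

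The main obstacle is the very first ingredient of this plan: the required signature packing of strength $2p+1$ in signed bipartite planar graphs of negative girth at least $8p+2$. For $p=1$ the corresponding fact (six disjoint packings at negative girth $6$) is Theorem~\ref{thm:packing}, and its proof in \cite{DKK16,NRS13} already depends on the $4$-Color Theorem. A packing theorem of strength $2p+1$ at this girth would, by the reduction above, immediately yield Jaeger--Zhang, so any proof must use input at least as strong as the conjecture itself; equivalently, it would correspond to a generalized dual flow/orientation result in the spirit of Tutte's $3$-flow and $5$-flow conjectures, which are themselves open. I would therefore expect the heart of any proof to consist of a new planar packing or modulo-orientation result, and the role of the argument sketched above is only to show that such a packing result is \emph{sufficient} to close the gap.
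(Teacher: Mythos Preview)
You are right that the statement is the Jaeger--Zhang conjecture and that the paper does not prove it; it is listed only as an open problem, with partial progress recorded in the table. So there is no paper proof to compare against, and your opening disclaimer is appropriate.

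Your translation via $T^*_2$ is correct: by Lemma~\ref{lem:X_c(T_l)}, $\chi_c(G)\le\frac{2p+1}{p}$ is equivalent to $\chi_c(T^*_2(G))\le\frac{8p+4}{4p+1}$, and $T^*_2(G)$ is signed bipartite planar of negative girth at least $8p+2$. However, the packing-and-lift scheme you then outline breaks down at the ``Gr\"otzsch-type coloring'' step, and not merely because a packing theorem is missing. The switching core of $B_{8p+4;\,4p+1}$ is the negative cycle $C_{-(4p+2)}$ (this is the case $4k=8p+4$, $q=2k-1$ discussed after Theorem~\ref{thm:BK}), so mapping $T^*_2(G)$ to your target means mapping it to a cycle. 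In the paper's proof for $\mathcal{P}^*_6$, the lift works precisely because the target $(K_{3,3},M)$ is \emph{complete} bipartite: any proper $3$-coloring of the contracted graph $G'$ produces a homomorphism, since every pair $x_iy_j$ is an edge. For a cyclic target $C_{-(4p+2)}$ this fails; a proper $(2p+1)$-coloring of $G'$ does not lift, because consecutive colours, not merely distinct ones, are required on adjacent vertices. What you would actually need is a homomorphism $G'\to C_{2p+1}$, i.e.\ a circular $\frac{2p+1}{p}$-coloring of $G'$.

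This has two consequences. First, $2p+1$ packings are far too few: after contracting one you only obtain odd girth $\ge 2p+1$ for $G'$, and planar graphs of odd girth $2p+1$ need not map to $C_{2p+1}$ (already for $p=1$ this would ask every loop-free planar graph to be $3$-colourable). Second, even if you pack $4p+2$ signatures (so that $G'$ has odd girth $\ge 4p+1$), the required step $G'\to C_{2p+1}$ is exactly the Jaeger--Zhang conjecture for $G'$, so the reduction is genuinely circular, not just dependent on a hard packing lemma. Your $p=1$ sanity check is also off: in your own framework the case $p=1$ lives at negative girth $10$ with $3$ packings, which is not Theorem~\ref{thm:packing}; the six-packing result at girth $6$ belongs to the $(K_{3,3},M)$ problem for $\mathcal{P}^*_6$, a different target with a different lift. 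In short, the obstruction is not only the strength of the packing input, but that the lift step itself requires a cyclic (circular) coloring of $G'$ that is equivalent to what you are trying to prove.
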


A bipartite analogue of this conjecture was first proposed in \cite{NRS15}, but considering the result of \cite{NPW20+}, it is modified to the following.

\begin{conjecture}{\rm [Bipartite analogue of Jaeger-Zhang Conjecture]}\label{conj:JZ-BiP}
Given a positive integer $p$, we have
$\chi_c(\mathcal{P}^*_{4p})\leq \frac{4p}{2p-1}$. 
\end{conjecture}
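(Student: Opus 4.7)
The plan is to extend the strategy of Theorem~\ref{thm:BoundingXc(P6)} from $p=2$ to general $p$, reducing Conjecture~\ref{conj:JZ-BiP} to the combination of a signature-packing input and the original Jaeger--Zhang Conjecture~\ref{conj:JZ}. By the $C_{\!\scriptscriptstyle -2k}$ characterization of circular $\tfrac{4k}{2k-1}$-coloring stated in the corollary above, it suffices to show that every $(G,\sigma)\in\mathcal{P}^*_{4p}$ admits a switching homomorphism to $C_{\!\scriptscriptstyle -2p}$.

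The first ingredient is a strengthening of Theorem~\ref{thm:packing}: for any $(G,\sigma)\in\mathcal{P}^*_{4p}$, one should find $4p$ pairwise disjoint edge sets $E_1,\dots,E_{4p}$ such that the signature $\sigma_i$ with negatives $E_i$ is switching-equivalent to $\sigma$. Such a statement is part of the signature-packing program of~\cite{NY21+} and, like Theorem~\ref{thm:packing}, should ultimately follow from a $4$-Color Theorem type input on the planar dual. Granted the packing, contract $E_1$ to form $G'$; the counting used in the proof of Theorem~\ref{thm:BoundingXc(P6)} adapts verbatim, showing that any odd cycle of $G'$ must come from an even cycle of $G$ that is negative in every $(G,\sigma_i)$, hence contains at least one edge from each of $E_2,\dots,E_{4p}$, all of which survive contraction. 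Thus $G'$ is a loopless planar graph of odd girth at least $4p-1$. Since $4p-1\geq 4(p-1)+1$, invoking Jaeger--Zhang~\ref{conj:JZ} with parameter $p-1$ yields a homomorphism $\varphi\colon G'\to C_{2p-1}$.

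The delicate step is the lift from $\varphi$ to a switching homomorphism $\psi\colon (G,\sigma_1)\to C_{\!\scriptscriptstyle -2p}$. Label the vertices of $C_{\!\scriptscriptstyle -2p}$ as $v_0,\dots,v_{2p-1}$ along the cycle, with $v_{2p-1}v_0$ the unique negative edge; the quotient that contracts this edge sends $v_0,v_{2p-1}\mapsto w_0$ and $v_i\mapsto w_i$ for $1\leq i\leq 2p-2$, recovering $C_{2p-1}$. A blob (connected component of $E_1$) mapped by $\varphi$ to a vertex $w_i$ with $i\neq 0$ must be assigned the single value $v_i$ under $\psi$, which is only consistent if the blob is a singleton; a blob mapped to $w_0$ has its vertices two-colored by $\{v_0,v_{2p-1}\}$ using the bipartition of $G$, and this local choice must be globally consistent with the cross-blob edges. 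In the proof of Theorem~\ref{thm:BoundingXc(P6)}, the richer target $(K_{3,3},M)$ sidesteps this issue because it has two vertices over \emph{every} color of $K_3$; the bare target $C_{\!\scriptscriptstyle -2p}$ admits such a double cover only over $w_0$, so here one must either show that non-singleton blobs can always be steered into color $w_0$ (exploiting the rotational freedom of $\varphi$ on $C_{2p-1}$) or modify $\varphi$ locally before lifting.

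The chief obstacles are that both inputs to this scheme are themselves open problems: Jaeger--Zhang~\ref{conj:JZ} is one of the deepest unresolved questions in the area, and a packing of $4p$ signatures for negative girth exactly $4p$ is not known in general. Accordingly, the above is better viewed as a reduction of Conjecture~\ref{conj:JZ-BiP} to Conjecture~\ref{conj:JZ} modulo packing, rather than as an unconditional proof. A self-contained attack avoiding Jaeger--Zhang would have to mimic and sharpen the discharging arguments behind the results of~\cite{LNWZ21+,LWZ20} that establish $\chi_c(\mathcal{P}^*_{6p-2})\leq \tfrac{4p}{2p-1}$; the hard part will be replacing the reducible-configuration analysis, which currently exploits the extra slack from negative girth $6p-2$, with a new analysis that operates under only the tight hypothesis of negative girth $4p$.
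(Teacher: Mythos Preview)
The statement you are addressing is presented in the paper as an open \emph{conjecture}; the paper offers no proof, so there is nothing on the paper's side to compare against. Your own text concedes this, describing the proposal as ``a reduction of Conjecture~\ref{conj:JZ-BiP} to Conjecture~\ref{conj:JZ} modulo packing, rather than as an unconditional proof.'' That is an accurate self-assessment: both inputs you invoke---a $4p$-fold signature packing for members of $\mathcal{P}^*_{4p}$ and the Jaeger--Zhang conjecture itself---are open in general.

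Two further points deserve attention. First, Theorem~\ref{thm:BoundingXc(P6)} is not the case $p=2$ of Conjecture~\ref{conj:JZ-BiP}: that theorem concerns $\mathcal{P}^*_{6}$ with the bound~$3$, whereas $p=2$ of the conjecture concerns $\mathcal{P}^*_{8}$ with the bound~$\tfrac{8}{3}$ (the latter is handled in \cite{NPW20+} by density arguments, not by packing and contraction). So your scheme is inspired by, but not an extension of, that proof. Second, even granting both conditional inputs, the lifting step is a genuine obstruction rather than a technicality. In the proof of Theorem~\ref{thm:BoundingXc(P6)} the lift from a $K_3$-coloring to $(K_{3,3},M)$ succeeds precisely because $(K_{3,3},M)$ sits over $K_3$ with two vertices above \emph{every} color class, so the bipartition of $G$ resolves every contracted blob simultaneously. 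The map $C_{\!\scriptscriptstyle -2p}\to C_{2p-1}$ you describe is two-to-one over only a single vertex $w_0$, so any nontrivial blob sent to some $w_i$ with $i\neq 0$ simply has no lift. Steering all nontrivial blobs to $w_0$, or replacing $C_{\!\scriptscriptstyle -2p}$ by a larger signed bipartite target of the same circular chromatic number that genuinely double-covers $C_{2p-1}$, would each require a separate argument that you have not supplied; without it the reduction does not close even conditionally.
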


The bound of Conjecture~\ref{conj:JZ-BiP} seems to what one may expect for $\chi_c(\mathcal{P}^*_{4p-1})$ and that of Conjecture~\ref{conj:JZ} seems to be what one may expect for $\chi_c(\mathcal{P}^*_{4p+2})$ as well.

\section{Conclusions and further questions}\label{sec:Conclusion}

In this paper, verifying the importance of the study of circular chromatic number of signed bipartite graphs we have presented the signed bipartite circular cliques. Then, using the 4-Color Theorem, we have shown an upper bound of $3$ for the circular chromatic number of signed bipartite planar graphs of negative girth at least $6$. We have provided a table summarizing the best known results on the circular chromatic number of signed planar graphs with a girth condition. Beside all the open questions that are summarized in the table, there are two questions of interest to mention.

The first is about the use the $4$-Color Theorem in our proof of the upper bound of $3$ for the circular chromatic number of the subclass $\chi_c(\mathcal{P}^*_{2})$. Could one find a relatively short proof of this without using the $4$-Color Theorem? Or can one show that, on the contrary, this result implies the $4$-Color Theorem? We recall that, in Section~\ref{sec:X_c}, reformulations of the $4$-Color Theorem using special classes of planar graphs of high girth are given. So this would not be a surprise. 

The second question is to potentially strengthen our result to include the Gr\"otzsch theorem as a special case. One possibility is observed by reformulating the Gr\"otzsch theorem itself as follows.

\begin{theorem}{\rm [Gr\"otzsch's theorem restated]}
If $G$ is a planar graph satisfying that $K_3\not\to G$, then $G\to K_3$.
\end{theorem}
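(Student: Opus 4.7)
The plan is to recognize the statement as the classical theorem of Grötzsch written in the language of graph homomorphisms; once the two sides of the implication are translated into coloring terms, no additional argument is required beyond an appeal to Grötzsch's theorem itself.

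First I would unpack the hypothesis $K_3 \not\to G$. A homomorphism $K_3 \to G$ is a mapping of the three vertices of $K_3$ to vertices of $G$ such that each pair of images is joined by an edge; two coincident images would force the existence of a loop on the common image. Hence $K_3 \to G$ holds if and only if $G$ contains a triangle or carries a loop, and the condition $K_3 \not\to G$ is therefore equivalent to $G$ being loopless and triangle-free. Multi-edges of $G$, if present, play no role in this condition. Next I would interpret the conclusion: since $K_3$ is simple and loopless, a homomorphism $G \to K_3$ is exactly a proper $3$-coloring of the underlying simple graph of $G$.

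Combining the two translations, the statement becomes ``every loopless triangle-free planar graph admits a proper $3$-coloring,'' which is Grötzsch's theorem verbatim. There is thus no genuine obstacle in the argument; the entire content of the reformulation is the short homomorphism/coloring dictionary above. The point of writing Grötzsch's theorem in this shape is conceptual rather than technical: it places it in a form parallel to Theorem~\ref{thm:main} (and to the restated $4$-Color Theorem of Theorem~\ref{thm:4CT-restated2}), thereby suggesting the potential common strengthening of Grötzsch's theorem and Theorem~\ref{thm:main} hinted at in Section~\ref{sec:Conclusion}.
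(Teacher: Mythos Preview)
Your proposal is correct and matches the paper's own treatment: the paper offers no separate proof, simply labeling the statement ``Gr\"otzsch's theorem restated,'' which is precisely the dictionary translation you spell out. Your unpacking of $K_3\not\to G$ as ``loopless and triangle-free'' and of $G\to K_3$ as ``properly $3$-colorable'' is exactly the intended reading.
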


We recall that if $K_3\not\to G$, then $S(K_3)\not\to S(G)$ and that if $G\to K_3$ then $S(G)\to S(K_3)$. Thus a potential strengthening of our result, which would include the Gr\"otzsch theorem, is as follows.

\begin{conjecture}
If $(G, \sigma)$ is a signed bipartite planar graph with the property that $S(K_3)\not \to (G, \sigma)$, then $\chi_c(G, \sigma)\leq 3$, i.e., $(G,\sigma)\to(K_{3,3}, M)$.
\end{conjecture}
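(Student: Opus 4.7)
The plan is to follow the template of the proof of Theorem~\ref{thm:BoundingXc(P6)}, replacing the negative-girth-at-least-$6$ hypothesis there by a structural argument driven by the forbidden substructure $S(K_3)$. First I would restate the hypothesis combinatorially: a homomorphism $S(K_3)\to(G,\sigma)$ exists exactly when there are three vertices $v_1,v_2,v_3$ lying in one bipartition class of $G$ together with, for each pair $\{i,j\}$, two common neighbours $x_{ij},y_{ij}$ such that the $4$-cycle $v_ix_{ij}v_jy_{ij}$ is negative. Hence $S(K_3)\not\to(G,\sigma)$ is a sharpened ``triangle-free'' axiom for signed bipartite graphs; in particular it forbids digons in $(G,\sigma)$, since otherwise one may map $v_1,v_2,v_3$ to one end of the digon and all of the $x_{ij},y_{ij}$ to the other, alternating the two parallel edges around each $4$-cycle. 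So we already obtain negative girth at least~$4$.

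Next I would try to produce, as in the proof of Theorem~\ref{thm:BoundingXc(P6)}, a subset $E_1\subseteq E(G)$ whose associated signature $\sigma_1$ is equivalent to $\sigma$ and such that the planar graph $G'$ obtained from $G$ by contracting $E_1$ is triangle-free and loopless. Granted such an $E_1$, Gr\"otzsch's theorem provides a proper $3$-colouring $\varphi\colon V(G')\to\{1,2,3\}$, and then the construction of $\psi$ in the proof of Theorem~\ref{thm:BoundingXc(P6)} furnishes an edge-sign preserving homomorphism $(G,\sigma_1)\to(K_{3,3},M)$, which, combined with the switching $\sigma\leftrightarrow\sigma_1$, yields the desired $(G,\sigma)\to(K_{3,3},M)$. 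So the real task is to exhibit a triangle-free contraction.

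This contraction step is the main obstacle, and the reason the statement is a conjecture rather than a theorem. In the setting of Theorem~\ref{thm:BoundingXc(P6)}, the $6$-packing of equivalent signatures given by Theorem~\ref{thm:packing} (which itself uses the $4$-Colour Theorem together with negative girth at least $6$) is precisely what forces every odd cycle of $G'$ to have length at least $5$. Under the weaker hypothesis considered here no such $6$-packing is available a priori, so one must argue directly that a hypothetical triangle $v_1'v_2'v_3'$ of $G'$ assembles into a copy of $S(K_3)$ embedded in $(G,\sigma)$, contradicting the hypothesis. Concretely each edge $v_i'v_j'$ of the triangle lifts to an even cycle of $G$ containing exactly one edge of $E_1$; the challenge is to choose $E_1$, and to decompose such cycles, so that these three cycles share their ``apex'' vertices in exactly the rigid pattern demanded by $S(K_3)$. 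I expect this to require a refined packing-type lemma, perhaps via LP-duality or an Euler-characteristic argument, replacing Theorem~\ref{thm:packing} under the weaker hypothesis.

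Should this direct contraction approach resist, a secondary plan would be to attack a minimum counterexample $(G,\sigma)$ by standard reducibility together with discharging on the plane, using the absence of $S(K_3)$ in place of the absence of a triangle in classical Gr\"otzsch-type arguments. Either way, the essential difficulty is the same: extracting from the single axiom ``no $S(K_3)$'' the combinatorial slack that negative girth~$6$ provides automatically.
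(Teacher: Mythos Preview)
The statement you are addressing is a \emph{conjecture} in the paper, not a theorem; the paper offers no proof, only the motivation that it would simultaneously strengthen Theorem~\ref{thm:main} and contain Gr\"otzsch's theorem via the $S(\cdot)$ construction. So there is no ``paper's own proof'' to compare your proposal against.

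That said, your write-up is not a proof either, and you are explicit about this: you lay out a research programme modelled on the proof of Theorem~\ref{thm:BoundingXc(P6)} and correctly isolate the missing ingredient, namely a replacement for Theorem~\ref{thm:packing} (the six-signature packing, which genuinely requires negative girth~$6$) that would still force any contraction $G/E_1$ to be triangle-free. Your observation that $S(K_3)\not\to(G,\sigma)$ already excludes digons is correct, since $S(K_3)$, being signed bipartite, maps to any digon and hence to any $(G,\sigma)$ containing one. Your combinatorial reformulation of ``$S(K_3)\to(G,\sigma)$'' is serviceable but slightly imprecise: in a homomorphism the three apex vertices $v_1,v_2,v_3$ need not be distinct, nor need the $x_{ij},y_{ij}$; the non-degenerate description you give is only forced once digons are excluded.

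The genuine gap, which you yourself flag, is that a triangle in $G'=G/E_1$ lifts to a single negative even cycle of $(G,\sigma_1)$, not to three negative $4$-cycles in the $S(K_3)$ pattern; there is no evident mechanism by which the hypothesis $S(K_3)\not\to(G,\sigma)$ alone obstructs such a cycle, let alone guarantees a choice of $E_1$ avoiding all short ones. Until that lemma exists, the plan remains a plan. This is consistent with the paper's own stance: the statement is posed as an open problem, not a result.
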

	
We note that, as shown in \cite{NSWX20+}, for a signed bipartite graph $(G, \sigma)$ to have ``$(G,\sigma) \to (K_{3,3}, M)$'' is equivalent to have ``$(G,\sigma)\to(K_{6}, M)$'' where $(K_{6}, M)$ is the signed graph on $K_6$ with a perfect matching forming the set of negative edges.

Finally we would like to mention potential importance of the Table we have provided for the values of $\mathcal{P}^*_k$. This table, together with the recent development of the notion of circular chromatic number of signed graphs, helps to fill gaps in conjectures such as Jaeger-Zhang conjecture. When state alone there is gap of $4$ on the girth conditions between two consecutive statements (from $4p+1$ to $4p+5$). The refinement introduced here would help with easier inductive approach based on $k$. In such an approach, given an element $(G,\sigma)$ of $\mathcal{P}^*_{k+1}$, one would seek for an equivalent signature $\sigma'$ such that after contracting all negative edges of $\sigma'$, the resulting graph could be viewed as an element of $\mathcal{P}^*_k$. Then using a circular coloring, obtained by an inductive assumption on $k$, one may produce a required circular coloring of $(G, \sigma)$. This was indeed our approach to prove the upper bound of $3$ for the circular chromatic number of the class $\mathcal{P}^*_6$.

\medskip
{\bf Acknowledgement.} 
This work is supported by the French ANR project HOSIGRA (ANR-17-CE40-0022). It has also received funding from the European Union's Horizon 2020 research and innovation program under the Marie Sklodowska-Curie grant agreement No 754362.

\end{document}